\documentclass[a4paper,11pt]{article}

\usepackage{amsmath,latexsym}
\usepackage{amsthm}
\usepackage{amssymb}
\usepackage{euler}
\usepackage{graphics, graphicx}
\usepackage{amsfonts}
\usepackage[nesting]{hyperref}
\usepackage{longtable}
\usepackage{color}
\usepackage{colortbl}
\usepackage{pdflscape}
\providecommand{\keywords}[1]{\textbf{\textit{Keywords:}} #1}

\newtheorem{theorem}{Special Theorem}

\newtheorem{corollary}[theorem]{Corollary}
\newtheorem{lemma}[theorem]{Lemma}
\newtheorem{example}[theorem]{Example}
\newtheorem{remark}[theorem]{Remark}

{\bf}{\it}

\def\K{\mathbf{K}}

\def\R{\mathbb{R}}
\def\N{\mathbb{N}}

\def\B{\mathcal{B}}

\def\Mo{\mathrm{M}}
\def\L{\mathrm{L}}

\def\ll{\ell}

\newcommand{\fin}{\hspace \fill
 $\square$  \par \bigskip}

\begin{document}

\title{Revisiting several problems and algorithms in continuous location with $\ell_p$ norms}

\author{V\'ictor Blanco, Justo Puerto, Safae El Haj Ben Ali}

\date{}

\maketitle

\begin{abstract}
This paper addresses the general continuous single facility location problems in finite dimension spaces under possibly different $\ell_p$ norms in the demand points. We analyze the difficulty of this family of problems and revisit convergence properties of some well-known algorithms. The ultimate goal is to provide a common approach to solve the family  of continuous $\ell_p$  ordered median location problems in dimension $d$ (including of course the $\ell_p$ minisum or Fermat-Weber location problem for any $p\ge 1$). We prove that this approach has a polynomial worse case complexity for monotone lambda weights and can be also applied to constrained and even non-convex problems.\\
\keywords{Continuous location $\cdot$ Ordered median problems $\cdot$ Semidefinite programming $\cdot$ Moment problem.}
\end{abstract}

\section{Introduction}

Location analysis is a very active topic within the Operations Research community. It  has giving rise to a number of nowadays standard optimization problem some of them in the core of modern mathematical programming. One of its branches is continuous location a family of models directly related to important areas of mathematics such as linear and non-linear programming, convex analysis and global optimization (see e.g. \cite{DH02} and the references therein). It is widely agreed that modern continuous location started with the paper by Weber (1909) \cite{weber} who first considers the minimization of weighted sums of distances as an economical goal to locate industries. This problem is currently known as \textit{Fermat-Weber}, also because of the three points Fermat problem (s. XVII) firstly solved by Torricelli in 1659. The algorithmic part of this history starts at 1937 with the paper by Weiszfeld \cite{weiszfeld} who proposed an iterative gradient type algorithm  to find or to approximate the solutions of the above mentioned Fermat-Weber problem.

For several decades this algorithm remains forgotten but in 1973 Kuhn \cite{K1973} rediscovered it and proved its convergence, under some conditions, in the Euclidean case. One year later Katz \cite{Katz74} gives another convergence result. Several years later, a number of authors considered the weighted minisum problem under different norms mainly $\ell_p$ or polyhedral (see e.g. \cite{DH02} for a detailed literature review) and Chandrasekaran and Tamir \cite{CT1989} raise several interesting questions concerning resolubility of Weiszfeld algorithm.
Eventually, starting in the nineties, several authors were very interested in proving the convergence of some modifications of the Weiszfeld algorithm, usually called \textit{modified Weiszfeld} or \textit{generalized iterative procedure} for minisum location problems.

The convergence for Euclidean distances ($p = 2$) was  studied later by \cite{K1973,Katz74}, among others. Since then, we can find in the literature many references concerning this algorithm, as for instance the generalization to $\ell_p$ distances with $p \in [1, 2]$ \cite{MoVe79} or the analysis of its local and global convergence \cite{Bri92,Bri93,Bri95,BCC98}. Also, these results were extended to more general problems: on Banach spaces \cite{Eckhardt80,PuRCh99,PuRCh06}, on the sphere \cite{Zhang03}, with regional demand \cite{Chen01,VRE08}, with sets as demand facilities and using closest Euclidean distances \cite{Bri02} or with radial distances \cite{Chen84a,Chen84b,Cooper68,Katz69,Morris81}. In addition, one can find in the literature papers where the convergence is accelerated using alternative step sizes \cite{Drezner92,Drezner95,Harris76,Katz74,Ostresh}
or some related properties concerning the termination of the algorithm in any of the demand points after a finite number of iterations \cite{Bri95,BC98,Bri03,CCM02,CT1989,K1973}.

The influence of Weiszfeld algorithm in Location Analysis has been rather important very likely due to its very easy implementation. For several years, it was a very effective method to solve minisum continuous location problems, even though its theoretical convergence was not proven.  Thus, locators have devoted a lot of effort to prove its convergence. The global convergence result of this algorithm for $\ell_p$ $p\in [1,2]$ was proved in \cite{Bri93}  and  recently  \cite{RChV12} has given a proof to close the cases ($p>2$) that were not yet justified. This has been an important effort from a mathematical point of view. Nevertheless, pursuing this goal locators did not focus on the origin of the problem, namely to search for alternative, efficient algorithms to solve the $\ell_p$ minisum and some more general families of location problems.

The situation is even harder if we consider a more general family of location problems that have attracted a lot of attention in the field in the last years, namely continuous ordered median location problems \cite{NP05}. Ordered median problems represent as special cases
nearly all classical objective functions in location theory, including
the Median, CentDian, center and k-centra. More precisely,
the 1-facility ordered median problem  can be formulated
as follows: A vector of weights $(\lambda_1,\ldots,\lambda_n)$ is given. The
problem is to find a location for a facility that minimizes the
weighted sum of distances where the distance to the closest point
to the facility is multiplied by the weight $\lambda_n$, the distance to the second closest, by $\lambda_{n-1}$, and so on. The distance to the farthest point is multiplied by $\lambda_1$.
Many location problems can be formulated as the ordered 1-median
problem by selecting appropriate weights. For example,
the vector for which all $\lambda_i= 1$ is the unweighted 1-median problem,
the problem where $\lambda_1= 1$ and all others are equal to zero is
the 1-center problem, the problem where $\lambda_1=\ldots=\lambda_k=1$ and all others are equal to zero is the $k$-centrum. Minimizing the range of distances is
achieved by $\lambda_1=1$, $\lambda_n= -1$ and all others are zero. Despite its full generality, the main drawback of this framework is the difficulty of solving the problems with a unified tool. There have been some successful approaches that are now available whenever the framework space is either discrete (see \cite{BDNP05,MNPV08}) or a network (see \cite{KNP}, \cite{KNPT},   \cite{NiPu99} or \cite{PuTa05}). Nevertheless, the continuous case has been, so far, only partially covered even under the additional hypothesis of convexity. There have been some attempts to overcome this drawback and there are nowadays some available methodologies to tackle these problems, at least in the plane and with Euclidean norm. In Drezner \cite{drezner2007} and Drezner and Nickel \cite{DN09-01,DN09-02} the authors present two different approaches. The first one uses a geometric branch and bound method based on triangulations (BTST) and the second one on a D-C decomposition for the objective function that allow solving problems on the plane.  Espejo et al \cite{ERChV09}  also address the unconstrained   convex ordered median location problem on the plane and Rodriguez-Chia et al. \cite{ERChD10} attacks the  $k$-centrum problem using geometric arguments and developing a better algorithm applicable only for that unconstrained problem on the plane and Euclidean distances. More recently, Blanco et al. \cite{BPS12} have presented a new methodology based on a  hierarchy of SDP relaxations that can be used to solve (approximate) the optimal solutions of the general ordered  median location problems which main drawback is the size of the SDP objects that have to be used to get good accuracy in high dimension.

The above discussion points out that there exists a lack of a unified resolution approach to those problems as well as effective algorithms for the general cases. Our goal in this paper  is to design a common approach to solve  the family  of continuous $\ell_p$  ordered median location problems in dimension $d$ (including of course the $\ell_p$ minisum or Fermat-Weber location problem for any $p\ge 1$). We prove that this approach has a polynomial worse case complexity for monotone lambda weights and can be also applied to constrained and to approximate even non-convex problems. Thus, providing a unifying new algorithmic paradigm for this class of location problems. First, for  convex location problems it avoids the drawback of limit convergence proven for the Weiszfeld type algorithms. Then, it can be applied to any convex ordered median problem, even with mixed norms, in any dimension and with rather general convex constraints.  Moreover, we show an explicit reformulation of these problems as  SDP  problems which enables the usage of standard free source solvers (SEDUMI, SDPT3,...) to solve them up to any degree of accuracy. Finally, we also show how to adapt this approach to approximate up to any degree of accuracy non-convex constrained location problems using a hierarchy of convergent relaxed problems.

The paper is organized in 5 sections. In Section \ref{s:2} we provide a compact representation, valid for any unconstrained convex ordered location problem, by means of a new formulation that reduces these problems to semidefinite problems. This approach allows us to ensure that all these problems are polynomially solvable in finite dimension.  Section \ref{s:constrained} is devoted to extend the results of Section \ref{s:2} to the case of constrained problems under the condition of SDP-representability. Then, we handle the general case of non-convex constrained ordered median location problem for which we construct a hierarchy of SDP relaxations that converges to the optimal solution of the original problem. Our Section \ref{s:4} is devoted to the computational experiments. We report results in four different problems type, namely minisum (Weber), minimax (center), $k$-centrum (minimizing the sum of the $k$-largest distances) and general ordered median problems. The paper ends, in Section \ref{s:5}, with some conclusions and an outlook for further research .

\section{A compact representation of the convex ordered median problem\label{s:2}}

In this section we present the convex ordered median problem in dimension $d$
where the distances are measured with a general $\ell_{\tau}$-norm being $\tau \in \mathbb{Q}$. We are given a set of demand points $S = \{a_{1},  . . .  , a_{n}\}$ and two sets of
scalars $\Omega := \{\omega_{1},  . . .  , \omega_{n}\}$, $\omega_i\ge 0,\; \forall\; i\in \{1, \ldots, n\} $ and $\Lambda := \{\lambda_{1}
, . . . , \lambda_{n}\}$ where $\lambda_{1}\geq . . . \geq\lambda_{n}\ge 0$.
The elements $\omega_{i}$ are weights corresponding to the importance given
to the existing facilities $a_{i}, i \in \{1,  . . .  ,n\}$ and depending
on the choice of the elements of $\Lambda$ we get different classes of problems.  We denote by $\mathcal{P}_n$ the set of permutations of the first $n$ natural numbers.

Given a permutation $\sigma\in \mathcal{P}_n$ satisfying
$$
\omega_{\sigma(1)}\|x-a_{\sigma(1)}\|_{\tau}\geq\ldots\geq\omega_{\sigma(n)}\|x-a_{\sigma(n)}\|_{\tau},
$$
\noindent the unconstrained  ordered median problem (see \cite{NP05}) consists of

\begin{equation}\label{pb1}
\min_{x\in\R^{d}}\sum_{i=1}^{n}\lambda_{i}\omega_{\sigma(i)}\|x-a_{\sigma(i)}\|_{\tau}.
\end{equation}

We start by showing a compact reformulation of the above problem  that will be later useful in our approach.

\begin{theorem} \label{th:rep}
Let $\tau=\frac{r}{s}$ be such that $r,s\in\N\setminus\{0\}$, $r>s$ and $\gcd(r,s)=1$. For any set of lambda weights satisfying $\lambda_{1}\geq . . . \geq\lambda_{n}$, Problem \eqref{pb1} is equivalent to
      \begin{eqnarray}
        \min& \displaystyle \sum_{k=1}^{n}v_{k}+\sum_{i=1}^{n}w_{i}  & \label{genpb} \\
        s.t &  v_{i}+w_{k}\geq\lambda_{k}z_{i} , & \forall i,k=1,...,n, \\
         & y_{ij}-x_j+a_{ij}\ge 0,& i=1,\ldots,n,\; j=1,...,d, \label{eq:n1-1}\\
         & y_{ij}+x_j-a_{ij}\ge 0,& i=1,\ldots,n,\; j=1,...,d,\label{eq:n1-2}\\
        & y_{ij}^r\leq u_{ij}^{s}z_{i}^{r-s},& i=1,\ldots,n,\; j=1,...,d,, \label{eq:n1-3}\\
        &\omega_{i}^{\frac{r}{s}}\sum_{j=1}^d u_{ij}\le z_i,& i=1,\ldots,n,\; \\
        & u_{ij}\ge 0,&  i=1,\ldots,n,\; j=1,\ldots,d. \label{eq:n1-4}
      \end{eqnarray}
\end{theorem}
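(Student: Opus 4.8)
The plan is to decouple the reformulation into two independent blocks: a ``distance block'' consisting of \eqref{eq:n1-1}--\eqref{eq:n1-4}, which I claim forces the auxiliary variable $z_i$ to be a valid surrogate for $\omega_i\|x-a_i\|_\tau$, and an ``ordering block'' consisting of the constraints $v_i+w_k\ge\lambda_k z_i$ together with the objective, which I claim realizes the ordered median value of the vector $(z_1,\dots,z_n)$. Once both blocks are understood, the equivalence follows by chaining a lower bound valid at every feasible point with an explicit feasible point attaining it.

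For the distance block, first note that \eqref{eq:n1-1} and \eqref{eq:n1-2} together are exactly $y_{ij}\ge|x_j-a_{ij}|$. I would then show that, for any feasible choice, $z_i\ge\omega_i\|x-a_i\|_\tau$. Assuming $z_i>0$, constraint \eqref{eq:n1-3} gives $u_{ij}\ge y_{ij}^{r/s}\,z_i^{-(r-s)/s}$; summing over $j$ and inserting into $\omega_i^{r/s}\sum_j u_{ij}\le z_i$ yields $z_i^{r/s}\ge\omega_i^{r/s}\sum_j y_{ij}^{r/s}\ge\omega_i^{r/s}\sum_j|x_j-a_{ij}|^{r/s}=\bigl(\omega_i\|x-a_i\|_\tau\bigr)^{r/s}$, hence $z_i\ge\omega_i\|x-a_i\|_\tau$ since $r/s>0$; the case $z_i=0$ (forcing $x=a_i$) is handled directly. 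Conversely, given any $x$, setting $y_{ij}=|x_j-a_{ij}|$, $z_i=\omega_i\|x-a_i\|_\tau$ and $u_{ij}=y_{ij}^{r/s}z_i^{-(r-s)/s}$ makes \eqref{eq:n1-3} and the row constraint hold with equality, so the surrogate is achievable.

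For the ordering block, I would recognize $\min\{\sum_k v_k+\sum_i w_i : v_i+w_k\ge\lambda_k z_i\}$ as the LP dual of the assignment problem $\max\{\sum_{i,k}\lambda_k z_i\,\xi_{ik}: \xi \text{ doubly stochastic}\}$. Since the assignment polytope has integral vertices and, by the rearrangement inequality, the optimal permutation pairs the largest $z_i$ with the largest $\lambda_k$, strong LP duality gives that this minimum equals $\om_\lambda(z):=\sum_i\lambda_i z_{(i)}$, the ordered median value of $(z_1,\dots,z_n)$, where $z_{(1)}\ge\dots\ge z_{(n)}$. Crucially, because $\lambda_1\ge\dots\ge\lambda_n\ge0$, the map $z\mapsto\om_\lambda(z)$ is nondecreasing in each coordinate.

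It remains to combine the two blocks. At any feasible point the objective is at least $\om_\lambda(z)$ by the ordering block, and $\om_\lambda(z)\ge\om_\lambda\bigl((\omega_i\|x-a_i\|_\tau)_i\bigr)$ by coordinatewise monotonicity together with $z_i\ge\omega_i\|x-a_i\|_\tau$; the right-hand side is precisely the objective of \eqref{pb1} at $x$, so the reformulation value is at least the minimum of \eqref{pb1}. The reverse inequality uses the explicit feasible point built above from an optimal $x$ of \eqref{pb1}, completing $v,w$ optimally for the ordering LP. The main obstacle, and the place requiring care, is exactly this tightening step: it is the monotonicity of $\om_\lambda$ (hence the hypothesis $\lambda_i\ge0$) that guarantees the relaxation $z_i\ge\omega_i\|x-a_i\|_\tau$ is driven to equality at optimality, together with a clean treatment of the degenerate coincidences $x=a_i$ where $z_i=0$.
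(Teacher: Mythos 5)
Your proposal is correct and follows essentially the same route as the paper's proof: the same LP-duality argument on the assignment polytope (total unimodularity / Birkhoff) to represent $\sum_i\lambda_i z_{(i)}$ via the $v,w$ constraints, and the same substitution $y_{ij}=|x_j-a_{ij}|$, $u_{ij}=y_{ij}^{r/s}z_i^{-(r-s)/s}$ to lift the norm inequality $z_i\ge\omega_i\|x-a_i\|_\tau$ into \eqref{eq:n1-1}--\eqref{eq:n1-4}. If anything, your explicit treatment of the tightening step --- that coordinatewise monotonicity of the ordered median map (using $\lambda_n\ge 0$) forces the relaxed constraints to equality at optimality, and the degenerate case $z_i=0$ --- is stated more carefully than in the paper, which leaves these points implicit.
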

\begin{proof}

Because of the condition $\lambda_{1}\geq . . . \geq\lambda_{n}$, Problem (\ref{pb1}) can be equivalently written as

\begin{equation}\label{pb2:ini}
        \displaystyle\min_{x\in\R^{d}}\max_{\sigma\in \mathcal{P}_n}\quad \displaystyle\sum_{i=1}^{n}\lambda_{i}\omega_{\sigma(i)}\|x-a_{\sigma(i)}\|_{\tau},
\end{equation}

Let us introduce auxiliary variables $z_{i}$, $i=1,\ldots,n$ to which we impose that $ z_{i}\geq\omega_{i}\|x-a_{i}\|_{\tau}$, to model the problem in a convenient form.
Now, for any permutation $\sigma \in \mathcal{P}_n$, let $z_{\sigma}=(z_{\sigma(1)},\ldots,z_{\sigma(n)})$. Moreover, let us denote by $(\cdot)$ the permutation that sorts any vector in nonincreasing sequence, i.e. $z_{(1)}\ge z_{(2)}\ge \ldots \ge z_{(n)}$.
Using that $\lambda_{1}\geq...\geq\lambda_{n}$ and since  $z_i\ge 0$, for all $i=1,\ldots,n$ then
$$
\displaystyle \sum_{i=1}^{n}\lambda_{i}z_{(i)}=\max_{\sigma\in \mathcal{P}_n}\displaystyle \sum_{i=1}^{n}\lambda_{i}z_{\sigma(i)}.
$$

The permutations in $\mathcal{P}_n$ can be represented by the following binary variables
$$
p_{ik}=\left\{
         \begin{array}{ll}
           1, & \mbox{ if } z_{i} \mbox{ goes in position } k, \\
           0, & \mbox{ otherwise},
         \end{array}
       \right.
$$

imposing that they verify the following constraints:

\begin{equation}\label{sys}
\left\{
  \begin{array}{ll}
    \displaystyle \sum_{i=1}^{n}p_{ik}=1, & \forall k=1,...,n, \\
    \displaystyle \sum_{k=1}^{n}p_{ik}=1, & \forall i=1,...,n.
  \end{array}
\right.
\end{equation}

Next, combining the two sets of variables we obtain that the objective function of \eqref{pb2:ini} can be equivalently written as::
\begin{equation}\label{sys0}
    \left\{
      \begin{array}{ll}
        \displaystyle \sum_{i=1}^{n}\lambda_{i}z_{(i)} =&\max\displaystyle \sum_{i=1}^{n}\sum_{k=1}^{n}\lambda_{k}z_{i}p_{ik} \\
        & s.t \quad \displaystyle \sum_{i=1}^{n}p_{ik}=1, \; \forall k=1,...,n, \\
        & \qquad \displaystyle \sum_{k=1}^{n}p_{ik}=1, \; \forall i=1,...,n, \\
        & \qquad p_{ik}\in\{0,1\}.
      \end{array}
    \right.
\end{equation}
Now, we point out that for fixed $z_{1},...,z_{n}$, the above problem is an assignment problem and its constraint matrix is totally unimodular, so that solving a continuous relaxation of the problem always yields an integral solution vector \cite{AMO1993}, and thus a valid permutation.
Moreover, the dual of the linear programming relaxation of (\ref{sys0}) is strong and also gives the value of the original binary formulation of (\ref{sys0}).

Hence, for any vector $z\in \mathbb{R}^n$, by using the dual of the assignment problem (\ref{sys0}) we obtain the following expression
\begin{equation}\label{sys2}
    \left\{
       \begin{array}{ll}
         \displaystyle \sum_{i=1}^{n}\lambda_{i}z_{(i)}=&  \min\displaystyle \sum_{k=1}^{n}v_{k}+\sum_{i=1}^{n}w_{i} \\
         &  s.t \quad v_{i}+w_{k}\geq\lambda_{k}z_{i},\; \forall i,k=1,...,n.
       \end{array}
     \right.
\end{equation}

Finally, we replace (\ref{sys2}) in (\ref{pb2:ini}) and we get

\begin{equation}\label{genpb1}
    \left\{
      \begin{array}{ll}
        \min\displaystyle \sum_{k=1}^{n}v_{k}+\sum_{i=1}^{n}w_{i} \\
        s.t \quad v_{i}+w_{k}\geq\lambda_{k}z_{i} , & \forall i,k=1,...,n, \\
        \qquad z_{i}\geq\omega_{i}\|x-a_{i}\|_{\tau},& i=1,...,n.
      \end{array}
    \right.
\end{equation}

It remains to prove that each inequality $z_{i}\geq\omega_{i}\|x-a_{i}\|_{\tau},\;i=1,...,n$ can be replaced by the system:
\begin{eqnarray*}
 y_{ij}-x_j+a_{ij}\ge 0,&  j=1,...,d. \\
 y_{ij}+x_j-a_{ij}\ge 0,&  j=1,...,d.\\
y_{ij}^r\leq u_{ij}^{s}z_{i}^{r-s},&  j=1,...,d. \\
\omega_{i}^{\frac{r}{s}}\sum_{j=1}^d u_{ij}\le z_i,& \\
 u_{ij}\ge 0,& \forall\;  j=1,\ldots,d.
\end{eqnarray*}

Indeed, set $\rho=\frac{r}{r-s}$, then $\frac{1}{\rho}+\frac{s}{r}=1$. Let $(\bar x,\bar z_i)$ fulfill the inequality  $z_{i}\geq\omega_{i}\|x-a_{i}\|_{\tau}$. Then we have
\begin{eqnarray}
  \omega_{i}\|\bar x-a_{i}\|_{\tau}\leq \bar z_{i} &\Longleftrightarrow& \omega_{i}\left(\sum_{j=1}^{d}|\bar x_{j}-a_{ij}|^{\frac{r}{s}}\right)^{\frac{s}{r}}\leq \bar z_{i}^{\frac{s}{r}}\bar z_{i}^{\frac{1}{\rho}} \nonumber \\
   &\Longleftrightarrow& \omega_{i}\left(\sum_{j=1}^{d}|\bar x_{j}-a_{ij}|^{\frac{r}{s}}\bar z_{i}^{\frac{r}{s}(-\frac{r-s}{r})}\right)^{\frac{s}{r}}\leq \bar z_{i}^{\frac{s}{r}}\nonumber \\
&\Longleftrightarrow& \omega_{i}^{\frac{r}{s}}\sum_{j=1}^{d}|\bar x_{j}-a_{ij}|^{\frac{r}{s}}\bar z_{i}^{-\frac{r-s}{s}}\leq \bar z_{i} \label{eq1}
\end{eqnarray}
Then (\ref{eq1}) holds if and only if $\exists u_{i}\in\R^d$, $u_{ij}\ge 0,\; \forall j=1,...,d$ such that
$$
   |\bar x_{j}-a_{ij}|^{\frac{r}{s}}\bar z_{i}^{-\frac{r-s}{s}}\leq u_{ij},\quad\mbox{ satisfying }\quad \omega_{i}^{\frac{r}{s}}\sum_{j=1}^{d}u_{ij}\leq \bar z_{i}, $$
or equivalently,
\begin{equation} \label{eq:n1} |\bar x_{j}-a_{ij}|^{r}\leq u_{ij}^{s}\bar z_{i}^{r-s},\quad \omega_{i}^{\frac{r}{s}}\sum_{j=1}^{d}u_{ij}\leq \bar z_{i}.
\end{equation}
Set $\bar y_{ij}=|\bar x_j-a_{ij}|$ and $\bar u_{ij}=|\bar x_j-a_{ij}|^{\tau} \bar z_i^{-1/\rho}$. Then, clearly $(\bar x,\bar z_i,\bar y,\bar u)$ satisfies (\ref{eq:n1-1})-(\ref{eq:n1-4}).

Conversely, let $(\bar x,\bar z_i,\bar y,\bar u)$ be a feasible solution of (\ref{eq:n1-1})-(\ref{eq:n1-4}). Then, $\bar y_{ij}\ge |\bar x_{ij}-a_{ij}|$ for all $i,j$ and by (\ref{eq:n1-3}) $\bar u_{ij}\ge \bar y_{ij}^{(r/s)}z_i^{-\frac{r-s}{s}}\ge |\bar x_j-a_{ij}|^{\tau} \bar z_i^{-\frac{r-s}{s}}$. Thus,
$$ \omega_i^{r/s}\sum_{j=1}^d |\bar x_j-a_{ij}|^{r/s} \bar z_i^{-\frac{r-s}{s}} \le \omega_i^{r/s} \sum_{j=1}^d \bar u_{ij} \le \bar z_i,$$
which in turns implies that $\omega_i^{r/s}\sum_{j=1}^d |\bar x_j-a_{ij}|^{r/s} \le \bar z_i \bar z_i^{\frac{r-s}{s}}$ and hence, $\omega_i\|\bar x-a_i\| \le \bar z_i$.

\fin
\end{proof}

Problem (\ref{genpb}) -(\ref{eq:n1-4}) is an exact representation of Problem (\ref{pb1}) in any dimension and for any $\ell_{\tau}$-norm.

For the case $\tau=1$, the above problem reduces to a linear programming problem.
\begin{corollary}
If $\tau=1$ the reformulation given by Problem (\ref{pb1}) is
\begin{equation}\label{genpb-l1}
    \left\{
      \begin{array}{ll}
        \min\displaystyle \sum_{k=1}^{n}v_{k}+\sum_{i=1}^{n}w_{i} \\
        s.t \quad v_{i}+w_{k}\geq\lambda_{k}z_{i} , & \forall i,k=1,...,n, \\
        \qquad z_{i}\geq\omega_{i}\sum_{j=1}^d u_{ij},& i=1,...,n,\\
        \qquad x_j-a_{ij}\le u_{ij} & i=1,...,n,\; j=1,\ldots,d,\\
        \qquad -x_j+a_{ij}\le u_{ij} & i=1,...,n,\; j=1,\ldots,d.
      \end{array}
    \right.
\end{equation}
\end{corollary}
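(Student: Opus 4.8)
The plan is to reuse the intermediate reformulation obtained inside the proof of Theorem~\ref{th:rep} and only re-do the single step that encodes the norm constraint. First I would observe that the passage from \eqref{pb1} to the system \eqref{genpb1} never uses any specific property of the norm: one introduces the variables $z_i$ subject to $z_i\geq\omega_i\|x-a_i\|_\tau$, rewrites the sorted weighted sum $\sum_i\lambda_i z_{(i)}$ as the value of an assignment problem (via total unimodularity of its constraint matrix), and applies LP duality to replace that inner maximization by the constraints $v_i+w_k\geq\lambda_k z_i$. None of these manipulations depend on $\tau$, so they remain valid verbatim when $\tau=1$. Hence it suffices to show that, for $\tau=1$, the constraint $z_i\geq\omega_i\|x-a_i\|_1$ can be replaced by the linear subsystem appearing in \eqref{genpb-l1}.

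For $\tau=1$ the norm is simply $\|x-a_i\|_1=\sum_{j=1}^d|x_j-a_{ij}|$, so the only remaining task is the standard linearization of absolute values. I would introduce, for each $i$ and $j$, an auxiliary variable $u_{ij}$ meant to bound $|x_j-a_{ij}|$ from above; this is captured by the two linear inequalities $x_j-a_{ij}\le u_{ij}$ and $-x_j+a_{ij}\le u_{ij}$, together with the aggregated inequality $z_i\geq\omega_i\sum_{j=1}^d u_{ij}$.

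The equivalence then follows from a short two-direction argument. In one direction, given $(\bar x,\bar z_i)$ with $\bar z_i\geq\omega_i\|\bar x-a_i\|_1$, setting $\bar u_{ij}=|\bar x_j-a_{ij}|$ satisfies all three groups of inequalities. Conversely, if $(\bar x,\bar z_i,\bar u)$ is feasible for the linear subsystem, the two sign inequalities force $\bar u_{ij}\geq|\bar x_j-a_{ij}|$ for every $j$, whence $\sum_{j=1}^d\bar u_{ij}\geq\sum_{j=1}^d|\bar x_j-a_{ij}|=\|\bar x-a_i\|_1$ and therefore $\bar z_i\geq\omega_i\sum_{j=1}^d\bar u_{ij}\geq\omega_i\|\bar x-a_i\|_1$. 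Substituting this linear description of the norm constraint back into \eqref{genpb1} yields exactly \eqref{genpb-l1}.

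I do not expect a genuine obstacle here: the entire content is the elementary $\ell_1$ linearization, and the only point worth stating carefully is that the inequality $\bar u_{ij}\geq|\bar x_j-a_{ij}|$ (rather than equality) is all that is required, since the objective is a minimization and the norm constraint bounds $z_i$ only from below, so any slack in $u_{ij}$ can neither destroy feasibility nor produce a spuriously small objective value.
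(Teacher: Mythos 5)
Your proof is correct and follows exactly the route the paper intends: the paper states this corollary without proof precisely because it is the intermediate formulation \eqref{genpb1} from Theorem~\ref{th:rep} (whose assignment/duality steps are norm-independent) combined with the standard linearization of the $\ell_1$ norm. Your two-direction verification, including the remark that only $\bar u_{ij}\geq|\bar x_j-a_{ij}|$ rather than equality is needed, is exactly the argument the paper leaves implicit.
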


The reader may observe that the representation given in Theorem \ref{th:rep} is new and different from the one used in \cite{BPS12}. On the one hand, this new formulation is more efficient than the  one presented in \cite{BPS12} and specially tailored for the case of non-increasing monotone lambda weights, see e.g.  \cite[Lemma 8]{BPS12}. For the sake of readability we include it in the following.

Let \begin{equation}\label{kC0}
S_k(x):=\sum_{j=1}^k z_{(j)},
\end{equation}
where $z_{(j)}$ is such that $z_{(1)}\ge \ldots \ge z_{(n)}$.
That formulation applied to the setting of this paper reads as:

\begin{eqnarray}
 \min & \sum_{k=1}^n (\lambda_k-\lambda_{k+1}) S_k(x) \label{pro:lam-mon2} \\
 &  t_k+r_{kj}\ge z_j(x), \quad j,k=1,\ldots,n, \nonumber \\
 & r_{kj}\ge 0,\quad j,k=1,\ldots,n, \nonumber \\
 &  z_{j}\geq\omega_{j}\|x-a_{j}\|_{\tau}, \quad j=1,...,n. \nonumber
\end{eqnarray}

It is easy to see that formulation \eqref{pro:lam-mon2} has  $O(n^2+d)$ variables and $O(n^2)$ constraints whereas the new one written in similar terms as presented  in \eqref{genpb1} has $O(n+d)$ variables and $O(n^2)$ constraints.

Our goal is to show that for any  $\tau \in \mathbb{Q}$, Problem (\ref{genpb}) -(\ref{eq:n1-4}) also admits a compact formulation within another easy class of polynomially solvable mathematical programming problems. In order to get that we need to prove a technical lemma. Let $\#A$ denote the cardinality of the set $A$.

\begin{lemma} \label{le:n2}
Let $\tau=\frac{r}{s}$ be such that $r,s\in\N\setminus\{0\}$, $r>s$ and $\gcd(r,s)=1$.\\
Let $x,\;u\mbox{ and }t$ be non negative and satisfying
\begin{equation} \label{eq:ineq1}
x^{r}\leq u^{s}t^{r-s}.
\end{equation}
Assume that $2^{k-1} < r\leq2^{k}$ where $k\in\N\setminus\{0\}$ such that
\begin{equation}\label{00}
x^{2^{k}}\leq u^{s}t^{r-s}x^{2^{k}-r},
\end{equation}
and
\begin{eqnarray}
  s &=& \alpha_{k-1}2^{k-1}+\alpha_{k-2}2^{k-2}+\ldots+\alpha_{1}2^{1}+\alpha_{0}2^{0} \label{1},\\
  r-s &=& \beta_{k-1}2^{k-1}+\beta_{k-2}2^{k-2}+\ldots+\beta_{1}2^{1}+\beta_{0}2^{0} \label{2},\\
  2^{k}-r &=& \gamma_{k-1}2^{k-1}+\gamma_{k-2}2^{k-2}+\ldots+\gamma_{1}2^{1}+\gamma_{0}2^{0} \label{3},
\end{eqnarray}
where $\alpha_{i},\;\beta_{i},\;\gamma_{i}\in\{0,1\}$.

Then, if $(x,t,u)$ is a feasible solution of (\ref{eq:ineq1}) there exists $w$ such that  either

\begin{description}
\item[1.]  $(x,t,u,w)$ is a solution of System (\ref{sys1}), if $\alpha_i+\beta_i+\gamma_i=1$, for all $0<i<k-1$.
\begin{equation}\label{sys1}
    \left\{
      \begin{array}{ll}
        w_{1}^{2} &\leq u^{\alpha_0} t^{\beta_0} x^{\gamma_0}, \\
        w_{i+1}^{2}& \le w_i u^{\alpha_i} t^{\beta_i} x^{\gamma_i},\; i=1,\ldots,k-2   \\
        x^{2} &\leq w_{k-1}d,
      \end{array}
    \right.
\end{equation}
where $d=\left\{ \begin{array}{ll} w_{k-2} & \mbox{if } \alpha_{k-1}\beta_{k-1}+\gamma_{k-1}=0 \\  u^{\alpha_{k-1}} t^{\beta_{k-1}} u^{\gamma_{k-1}} & \mbox{if } \alpha_{k-1}\beta_{k-1}+\gamma_{k-1}=1 \end{array} \right.$
\item[2.] $(x,t,u,w)$ is a solution of System (\ref{sys1.2}), if  there exist $i_j$ and $i_{l(j)}$, $j=1,\ldots,c$ such that: \\
\textit{1. }  $0<i_1<i_2<\ldots<i_c\le k-2$,\\
\textit{2. }  $i_j<i_{l(j)}<i_{j+1}$,\\
\textit{3. } $\alpha_{i_j}+\beta_{i_j}+\gamma_{i_j}=3$,
  $\alpha_{i_{l(j)}}+\beta_{i_{l(j)}}+\gamma_{i_{l(j)}}=0$ and  $\alpha_{h}+\beta_{h}+\gamma_{h}=2$ for $h=i_j+1,\ldots,i_{l(j)-1}.$
\par \medskip

 \begin{equation}\label{sys1.2}
   \hspace*{-2cm} \left\{
      \begin{array}{ll}
        w_{1}^{2} &\leq u^{\alpha_0} t^{\beta_0} x^{\gamma_0}, \\
        w_{i+1}^{2}& \le w_i u^{\alpha_i} t^{\beta_i} x^{\gamma_i},\; i\in \{1,\ldots,i_{1}-1\} \\
        &\hspace*{-2.8cm}--------\mbox{for each } j=1,\ldots,c---------\\
        w_{\theta(j)}^{2}&\leq ut,\\
        w_{\theta(j)+1}^{2}&\leq w_{\theta(j)-1}x\\
        &\hspace*{-2.8cm}\left. \begin{array}{ll}
        w_{\theta(j)+2*s}^{2}&\leq w_{\theta(j)+2(s-1)}a_{i_j+s}  \\
        w_{\theta(j)+2*s+1}^{2}&\leq w_{\theta(j)+2s-1}b_{i_j+s}
               \end{array}
        \right\},\; \begin{array}{l} s=1,\ldots,i_{l(j)}-i_j-1\;and \\ a_{i_j+s}+b_{i_j+s}=u^{\alpha_{i_j+s}}t^{\beta_{i_j+s}}x^{\gamma_{i_j+s}},  \end{array} \\
        w_{\theta(j)+2(i_{l(j)}-i_j)}^{2}&\leq w_{\theta(j)+2(i_{l(j)}-i_j-1)}w_{\theta(j)+2(i_{l(j)}-i_j-1)+1},\; {\scriptsize \left\{\begin{array}{l} if \;m-1>\\ \theta(j)+2(i_{l(j)}-i_j-1)\end{array},\right.} \\
        w_{\theta(j)+2(i_{l(j)}-i_j)+s}^{2}& \leq w_{\theta(j)+2(i_{l(j)}-i_j)+s-1} u^{\alpha_{i_{l(j)}+s}} t^{\beta_{i_{l(j)}+s}} x^{\gamma_{i_{l(j)}+s}},\; {\scriptsize \left\{\begin{array}{l} \mbox{for all } s=1,\ldots,\\ \quad i_{j+1}-i_{l(j)}-1\end{array}\right.} \\
        &\hspace*{-2.8cm}------------------------------\\
        x^{2} &\leq w_{m}d.
      \end{array}
    \right.
\end{equation}
where $\theta=(\theta(j))_{j=1}^c$ such that $\theta(j)=2\#\{i:\alpha_{i}+\beta_{i}+\gamma_{i}\geq2,\; 1<i\leq i_{j}\}+\#\{i:\alpha_{i}+\beta_{i}+\gamma_{i}\leq1,\; 1<i\leq i_{j}\}$ for $j=1,...,c$,
$m=1+2\#\{i:\alpha_{i}+\beta_{i}+\gamma_{i}\geq2, 1<i<k\}+\#\{i:\alpha_{i}+\beta_{i}+\gamma_{i}=1, 1<i<k\}\le 2k$ and $d=\left\{ \begin{array}{ll} w_{m-1} & \mbox{if } \alpha_{k-1}+\beta_{k-1}+\gamma_{k-1}=0 \\  u^{\alpha_{k-1}} t^{\beta_{k-1}} u^{\gamma_{k-1}} & \mbox{if } \alpha_{k-1}+\beta_{k-1}+\gamma_{k-1}=1 \end{array} \right.$.
\end{description}
Conversely, if $(x,t,u,w)$ is a solution of (\ref{sys1}) or (\ref{sys1.2}) then $(x,t,u)$ is a feasible solution of (\ref{eq:ineq1}).
\end{lemma}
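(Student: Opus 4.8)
The plan is to read (\ref{eq:ineq1}) as a weighted geometric-mean inequality with dyadic weights and to realize it by a cascade of elementary rotated (hyperbolic) cone constraints of the type $w^2\le ab$, which are the basic second-order-cone representable blocks. First I would reduce (\ref{eq:ineq1}) to (\ref{00}): since $x,u,t\ge 0$ and $2^{k-1}<r\le 2^k$, multiplying (\ref{eq:ineq1}) by $x^{2^k-r}$ is a reversible operation when $x>0$ (the case $x=0$ being trivial), so the two inequalities are equivalent. Taking $2^k$-th roots, (\ref{00}) says exactly that $x\le u^{s/2^k}\,t^{(r-s)/2^k}\,x^{(2^k-r)/2^k}$, i.e. $x$ is bounded by the weighted geometric mean of $u,t,x$ with the three nonnegative weights $s/2^k$, $(r-s)/2^k$, $(2^k-r)/2^k$ that sum to $1$ because $s+(r-s)+(2^k-r)=2^k$. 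As all three weights are dyadic with denominator $2^k$, this geometric mean is produced by a depth-$k$ cascade of two-term geometric means, each expressed by one constraint $w^2\le(\text{product of two terms})$; the binary digits $\alpha_i,\beta_i,\gamma_i$ in \eqref{1}--\eqref{3} prescribe, level by level, which of $u,t,x$ feeds the cascade.

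The combinatorial heart of the statement is that the three binary strings add up to $2^k$, so at every position $i\in\{0,\dots,k-1\}$ the output bit must vanish while a single carry emerges at position $k$. Tracking the carry $c_i$ reveals only two regimes. When $\alpha_i+\beta_i+\gamma_i\le 1$ the carry stays in $\{0,1\}$ and the cascade is the single linear chain \eqref{sys1}. A position with $\alpha_i+\beta_i+\gamma_i=3$ injects a carry of $2$, which then travels through a run of positions with bit-sum $2$ and is re-absorbed at the next position with bit-sum $0$; these are precisely the carry-propagation segments indexed by $i_j<i_{l(j)}$ in item~2. A carry of $2$ forces two parallel geometric-mean branches to advance in step, which is why the indices are doubled as $\theta(j)+2s$ and a single leaf contribution $u^{\alpha_h}t^{\beta_h}x^{\gamma_h}$ is split as $a_h+b_h$ and fed to the two branches. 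Counting one node per low-carry position and two per high-carry position then gives the bound $m\le 2k$.

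For the forward implication I would build the cascade bottom-up, setting each auxiliary variable to the explicit monomial in $u,t,x$ that makes its defining inequality an equality (choosing the splits $a_h,b_h$ so the two parallel branches carry equal monomials, each $\tfrac12 u^{\alpha_h}t^{\beta_h}x^{\gamma_h}$). Telescoping these equalities upward multiplies the leaf exponents by the matching powers of $2$, so that, by the weight identities $\sum_i\alpha_i2^i=s$, $\sum_i\beta_i2^i=r-s$, $\sum_i\gamma_i2^i=2^k-r$, the top inequality $x^2\le w_m d$ collapses to exactly (\ref{00}); hence any feasible $(x,t,u)$ extends to a feasible $(x,t,u,w)$. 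The converse is the easier direction: raising the cascade inequality defining $w_{i+1}$ to the power $2^{i}$ and the top inequality to the power $2^{k-1}$, then multiplying all of them, makes every $w_i$ telescope away (and, in Case~2, pairs off the split branches), leaving precisely $x^{2^k}\le u^s t^{r-s}x^{2^k-r}$, which returns (\ref{eq:ineq1}).

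I expect the main obstacle to lie entirely in the carry-propagation bookkeeping of \eqref{sys1.2}: verifying that the doubled-index blocks reproduce the correct dyadic exponents after telescoping, that the two parallel branches can always be kept synchronized through the split $a_h+b_h$, and that they re-merge correctly at the positions $i_{l(j)}$ where the extra carry is absorbed. Pinning down the count $m\le 2k$, so the resulting representation stays linear in $\log_2 r$, is part of the same accounting and is what makes the subsequent SDP reformulation compact.
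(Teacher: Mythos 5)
Your overall architecture coincides with the paper's own proof: reduce (\ref{eq:ineq1}) to (\ref{00}) (trivial for $x=0$, divide by $x^{2^k-r}$ otherwise), read (\ref{00}) as a dyadic weighted geometric-mean inequality, realize it by a cascade of constraints of type $w^2\le ab$ whose shape is dictated by the binary decomposition, prove the forward direction by choosing the $w$'s to make every inequality tight, and the converse by squaring and telescoping upward. Indeed, your carry-tracking argument is a cleaner route to the combinatorial dichotomy (carry $1$ versus carry $2$) that the paper derives by laboriously equating two representations of $2^k$. The genuine flaw is in your treatment of the carry-$2$ blocks of (\ref{sys1.2}): you split the degree-two monomial $M_h:=u^{\alpha_h}t^{\beta_h}x^{\gamma_h}$ \emph{additively} into equal halves, $a_h=b_h=\tfrac12 M_h$, and claim the telescoping still collapses to (\ref{00}). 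It does not. In the telescoped product the two parallel branch variables enter with the \emph{same} exponent $2^h$, so position $h$ contributes $(a_hb_h)^{2^h}$, which for your choice equals $\bigl(M_h^2/4\bigr)^{2^h}=M_h^{2^{h+1}}4^{-2^h}$ rather than the required $M_h^{2^h}$: the exponents of $u,t,x$ coming from every sum-$2$ position get doubled and spurious constants appear, so the weight identities $\sum_i\alpha_i2^i=s$, $\sum_i\beta_i2^i=r-s$, $\sum_i\gamma_i2^i=2^k-r$ are no longer matched. Concretely, for positive data your system (with the splits $a_h=b_h=M_h/2$, which are optimal under the constraint $a_h+b_h=M_h$) is feasible precisely when $x^{2^k}\le\prod_{h:\,\mathrm{sum}\neq 2}M_h^{2^h}\cdot\prod_{h:\,\mathrm{sum}=2}\bigl(M_h^2/4\bigr)^{2^h}$, and since $M_h^2/4$ is smaller or larger than $M_h$ according as $M_h<4$ or $M_h>4$, this condition neither implies nor is implied by (\ref{00}); both directions of the equivalence fail under your reading.

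The repair — and the paper's actual intent, visible in its worked example, where the sum-$2$ monomial $ux$ is handled by $w_8^2\le w_6u$ and $w_9^2\le w_7x$ — is that the split must be \emph{multiplicative}: a sum-$2$ monomial is the product of exactly two of the letters $u,t,x$, and each parallel branch absorbs one of the two degree-one factors, so that $a_hb_h=M_h$ and the telescoping restores exactly $M_h^{2^h}$; likewise the sum-$3$ monomial $utx$ opening each block is split as $(ut)\cdot x$, one factor starting the new branch and the other continuing the old chain. (The condition "$a_{i_j+s}+b_{i_j+s}=u^{\alpha_{i_j+s}}t^{\beta_{i_j+s}}x^{\gamma_{i_j+s}}$" in the statement is best read as a typo for this factor decomposition; taking the "$+$" literally, as you did, makes the system inequivalent to (\ref{eq:ineq1}).) With that single correction, your tight-equality construction of the $w$'s, your backward telescoping, and your count of one node per carry-$1$ position and two per carry-$2$ position (giving $m\le 2k$) all go through and reproduce the paper's proof.
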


\begin{proof}.
To get the expressions of any of the systems (\ref{sys1}) or (\ref{sys1.2}), we discuss the decomposition (\ref{1}), (\ref{2}), (\ref{3}) of $s,\;r-s\mbox{ and }2^{k}-r$ in the basis $B=\{2^{l}\},\;l=0,...,k-1$.

Since $2^{k}=2^{k}-r+(r-s)+s$, we observe that (\ref{1})+(\ref{2})+(\ref{3}) gives a decomposition of $2^{k}$ in power of $2$ summands with  coefficients less than or equal than 3. Namely,
\begin{equation}\label{4}
2^{k}=(\alpha_{k-1}+\beta_{k-1}+\gamma_{k-1})2^{k-1}+\ldots+(\alpha_{1}+\beta_{1}+\gamma_{1})2^{1}+(\alpha_{0}+\beta_{0}+\gamma_{0})2^{0}.
\end{equation}
We discuss  two cases depending on the parity of $r$.

\noindent If $r$ is even then $s$ is odd since $\gcd(r,s)=1$, thus $r-s$ is odd and $2^{k}-r$ is even.

\noindent If $r$ is odd then $s$ can be odd or even; if $s$ is odd then $r-s$ is even and $2^{k}-r$ is odd; otherwise $r-s$ is odd and $2^{k}-r$ is odd.

From the above discussion, we observe that there are always two odd and one even numbers in the triplet $(s,r-s,2^{k}-r)$. Therefore, we conclude  that $$\alpha_{0}+\beta_{0}+\gamma_{0}=2.$$

On the other hand, since $\displaystyle\sum_{i=0}^{k-1}2^{i}=2^{k}-1$, then another representation of $2^k$ is:
\begin{equation}\label{5}
2^{k}=1.2^{k-1}+1.2^{k-2}+\ldots+1.2^{1}+2.2^{0}.
\end{equation}

Considering the fact that  (\ref{5}) and (\ref{4}) are two representations of $2^k$, by equating coefficients, we deduce some properties of the sums $(\alpha_{i}+\beta_{i}+\gamma_{i}),\;i=1,...,k-1.$

\begin{description}

\item[$\bullet$] First of all, we observe that $\alpha_{k-1}+\beta_{k-1}+\gamma_{k-1}$ can only assume the values $0$ or $1$.
  \item[$\bullet$] Second, since $\alpha_{0}+\beta_{0}+\gamma_{0}=2$ then it implies that $\alpha_{1}+\beta_{1}+\gamma_{1}=1\; or\; 3$, otherwise if $\alpha_{1}+\beta_{1}+\gamma_{1}=0\; or\; 2, $ then we will get $0.2^{1}=0$ or $2.2^{1}=2^{2}$ which means that we can not recover the term $2^{1}$ and then we will not get the decomposition as in (\ref{5}).
\item[$\bullet$] Third,  let $i_{0}$ be the first index, counting in a decreasing order from $k-1$ to 1, so that $\alpha_{i_{0}}+\beta_{i_{0}}+\gamma_{i_{0}}\neq1$ and $\forall\;i,$ $i_{0}<i\leq k-1$ we have $\alpha_{i}+\beta_{i}+\gamma_{i}=1$. Then three cases can occur:\\
\begin{enumerate}
\item if $\alpha_{i_{0}}+\beta_{i_{0}}+\gamma_{i_{0}}=3$, then
\begin{eqnarray*}
 2^{k} &=& 1.2^{k-1}+...+1.2^{i_{0}+1}+3.2^{i_{0}}+(\alpha_{i_{0}-1}+\beta_{i_{0}-1}+\gamma_{i_{0}-1})2^{i_{0}-1}+...+2.2^{0}, \\
       &=& 1.2^{k-1}+...+2.2^{i_{0}+1}+1.2^{i_{0}}+(\alpha_{i_{0}-1}+\beta_{i_{0}-1}+\gamma_{i_{0}-1})2^{i_{0}-1}+...+2.2^{0}, \\
       &\vdots&\\
       &=& 2.2^{k-1}+...+0.2^{i_{0}+1}+1.2^{i_{0}}+(\alpha_{i_{0}-1}+\beta_{i_{0}-1}+\gamma_{i_{0}-1})2^{i_{0}-1}+...+2.2^{0},
\end{eqnarray*}
which it is not possible and therefore it implies that $\alpha_{i_{0}}+\beta_{i_{0}}+\gamma_{i_{0}}=2\; or\;0.$\\
\item if $\alpha_{i_{0}}+\beta_{i_{0}}+\gamma_{i_{0}}=2$, then
\begin{eqnarray*}
 2^{k} &=& 1.2^{k-1}+...+1.2^{i_{0}+1}+2.2^{i_{0}}+(\alpha_{i_{0}-1}+\beta_{i_{0}-1}+\gamma_{i_{0}-1})2^{i_{0}-1}+...+2.2^{0}, \\
       &=& 1.2^{k-1}+...+2.2^{i_{0}+1}+0.2^{i_{0}}+(\alpha_{i_{0}-1}+\beta_{i_{0}-1}+\gamma_{i_{0}-1})2^{i_{0}-1}+...+2.2^{0}, \\
       &\vdots&\\
       &=& 2.2^{k-1}+...+0.2^{i_{0}+1}+0.2^{i_{0}}+(\alpha_{i_{0}-1}+\beta_{i_{0}-1}+\gamma_{i_{0}-1})2^{i_{0}-1}+...+2.2^{0},
\end{eqnarray*}
which  again it is not possible and therefore it implies that $\alpha_{i_{0}}+\beta_{i_{0}}+\gamma_{i_{0}}=0.$\\
From the above two cases, we summarize that the first sum $\alpha_{i_0}+\beta_{i_0}+\gamma_{i_0}\neq1$ must be necessarily $\alpha_{i_0}+\beta_{i_0}+\gamma_{i_0}=0$. Based on this we consider the only possible case.
\item if $\alpha_{i_0}+\beta_{i_0}+\gamma_{i_0}=0$, then  it must exist $ i_{1}<i_0$, satisfying that $\alpha_{i_{1}}+\beta_{i_{1}}+\gamma_{i_{1}}=3$ and such that for all $k$,  $i_1<k\leq i_{0}$, $\alpha_{k}+\beta_{k}+\gamma_{k}=2$. Indeed,\\
\begin{enumerate}
\item if $\alpha_{i_0-1}+\beta_{i_0-1}+\gamma_{i_0-1}=1$, then
\begin{eqnarray*}
 2^{k} &=& 1.2^{k-1}+...+1.2^{i_0+1}+0.2^{i_0}+(\alpha_{i_0-1}+\beta_{i_0-1}+\gamma_{i_0-1})2^{i-1}+...+2.2^{0}, \\
       &=& 1.2^{k-1}+...+1.2^{i_0+1}+0.2^{i_{0}}+1.2^{i_0-1}+...+2.2^{0}.
\end{eqnarray*}
Hence, since the sums $\alpha_{j}+\beta_{j}+\gamma_{j}\le 3$ for all $j$, one cannot  recover the sum $2^k$ in (\ref{5}) and the representation of $2^k$ would be wrong.\\
\item if $\alpha_{i_0-1}+\beta_{i_0-1}+\gamma_{i_0-1}=3$, then
 \begin{eqnarray*}
 2^{k} &=&  1.2^{k-1}+...+1.2^{i_0+1}+0.2^{i_0}+(\alpha_{i_0-1}+\beta_{i_0-1}+\gamma_{i_0-1})2^{i_0-1}+...+2.2^{0}, \\
       &=& 1.2^{k-1}+...+0.2^{i_{0}}+3.2^{i_0-1}+...+2.2^0, \\
       &=& 1.2^{k-1}+...+1.2^{i_{0}}+1.2^{i_0-1}+...+2.2^0.
\end{eqnarray*}
The representation of $2^k$ would be valid until the term $i_0-1$ and we can repeat the argument with the next element whose coefficient is different in the representation of $2^k$ in (\ref{4}) and (\ref{5}).
\item if $\alpha_{i_0-1}+\beta_{i_0-1}+\gamma_{i_0-1}=2$, then
\begin{eqnarray*}
 2^{k} &=& 1.2^{k-1}+...+1.2^{i_0+1}+0.2^{i_0}+(\alpha_{i_{0}-1}+\beta_{i_{0}-1}+\gamma_{i_{0}-1})2^{i_{0}-1}+...+2.2^{0}, \\
       &=& 1.2^{k-1}+...+0.2^{i_{0}}+2.2^{i_0-1}+...+2.2^0, \\
       &=& 1.2^{k-1}+...+1.2^{i_{0}}+0.2^{i_0-1}+...+2.2^0,
       \end{eqnarray*}
This way we get that the representations of $2^k$ are equal until the term $i_0$. Next, to recover the term $2^{i_0-1}$ then $\alpha_{i_0-2}+\beta_{i_0-2}+\gamma_{i_0-2}=2$ or $3$ so that we are in cases (b) or (c)  and we repeat  until we get the decomposition (\ref{5}).
\end{enumerate}
\end{enumerate}
\end{description}

The analysis above justifies that the only possible cases in any representation   of $2^k$ in the form $(2^k-r)+\; (r-s) +\; s$ and each of the addends ($2^k-r$, $r-s$ and $s$) in the basis $B=\{2^l\}$, $l=0,\ldots,k-1$ are those that correspond to cases 1 or 2 in the statement of the lemma.

Let $m$ denote the number of inequalities in any of the systems  (\ref{sys1}) or (\ref{sys1.2}). First of all, we observe that  the last inequality has a common form in any of the systems, namely $x^2\le w_md$. Indeed, if $\alpha_{k-1}+\beta_{k-1}+\gamma_{k-1}=0$ then we shall consider the inequality
  \begin{equation} \label{ineq:l1}
  x^{2} \leq w_{m}w_{m-1}
 \end{equation}
  otherwise i.e if $\alpha_{k-1}+\beta_{k-1}+\gamma_{k-1}=1$ then we shall consider the inequality
  \begin{equation} \label{ineq:l2} x^{2} \leq w_{m}u^{\alpha_{k-1}}t^{\beta_{k-1}}x^{\gamma_{k-1}}.
  \end{equation}

Based on the above observation, we have that  the systems (\ref{sys1}) and (\ref{sys1.2})
always include (\ref{ineq:l1}) or (\ref{ineq:l2}) and other inequalities depending on the cases. Let us analyze the two cases.
\begin{description}
  \item[\textit{Case 1.}] Let $(x,t,u)$ be a solution of system (\ref{00}) and $\alpha_{i}+\beta_{i}+\gamma_{i}=1$ for all $0<i<k-1$. Set $w_1=\sqrt{ u^{\alpha_0} t^{\beta_0} x^{\gamma_0}}$ and
        $w_{i+1}=\sqrt{w_i u^{\alpha_i} t^{\beta_i} x^{\gamma_i}}$, $i=2,\ldots,k-2$. Clearly, $(x,t,u,w)$ is a solution of system (\ref{sys1}).

        Conversely, if $(x,t,u,w)$ is a solution of system (\ref{sys1}) then propagating backward from the  last inequality to the first one we prove that $(x,t,u)$ is also a feasible solution of (\ref{00}).

        Finally, it is clear that in this case, $m$, the number of inequalities necessary to represent (\ref{00}) as system (\ref{sys1}) is $m=k-1$.
  \item[\textit{Case 2.}]
   Let $(x,t,u)$ be a solution of system (\ref{00}) and $\alpha_{i}+\beta_{i}+\gamma_{i}$ for all $0<i<k-1$ satisfying the hypotheses of Item \textit{2.} in the thesis of the lemma. Set $w_1=\sqrt{ u^{\alpha_0} t^{\beta_0} x^{\gamma_0}}$ and
        $w_{i+1}$ for $i=2,\ldots,m$ being defined recursively according to the inequalities in (\ref{ineq:l2}) from the previous values of $w_j$, $j=1,\ldots,i$, and $u,t,x$. Clearly, $(x,t,u,w)$ is a solution of system (\ref{sys1}).

        Conversely, if $(x,t,u,w)$ is a solution of system (\ref{ineq:l2}) then propagating backward from the  last inequality to the first one we prove that $(x,t,u)$ is also a feasible solution of (\ref{00}).
        \medskip

        We conclude the proof observing that the number of inequalities $m$ in any of the two representations is fixed and it is equal to $m=1+2\#\{i:\alpha_{i}+\beta_{i}+\gamma_{i}\geq2, 1<i<k\}+\#\{i:\alpha_{i}+\beta_{i}+\gamma_{i}=1, 1<i<k\}\le 2k$.

\end{description}

\fin
\end{proof}
We illustrate the application of the above lemma with the following example. \\
\begin{example}
Let us consider $\tau=\frac{100000}{70001}$ which in turns means that $r=10^5$ and $s=70001$.
\begin{eqnarray*}
 x^{100000}&\leq & u^{70001}t^{29999}, \\
  x^{2^{17}}=x^{131072} &\leq & u^{70001}t^{29999}x^{31072}.
\end{eqnarray*}
The representations of the exponents of $u,t,x$ in the inequality above in power of 2 summands are:
\begin{eqnarray*}
  u:\quad70001 &=& 1.2^{16}+0.2^{15}+0.2^{14}+0.2^{13}+1.2^{12}+0.2^{11}+0.2^{10}+0.2^{9}+1.2^{8}+\\
  & & 0.2^{7}+1.2^{6}+1.2^{5}+1.2^{4}+0.2^{3}+0.2^{2}+0.2^{1}+1.2^{0} \\
  t:\quad29999 &=& 0.2^{16}+0.2^{15}+1.2^{14}+1.2^{13}+1.2^{12}+0.2^{11}+1.2^{10}+0.2^{9}+1.2^{8}+\\
  & & 0.2^{7}+0.2^{6}+1.2^{5}+0.2^{4}+1.2^{3}+1.2^{2}+1.2^{1}+1.2^{0} \\
  x:\quad31072 &=& 0.2^{16}+0.2^{15}+1.2^{14}+1.2^{13}+1.2^{12}+1.2^{11}+0.2^{10}+0.2^{9}+1.2^{8}+\\
   & & 0.2^{7}+1.2^{6}+1.2^{5}+0.2^{4}+0.2^{3}+0.2^{2}+0.2^{1}+0.2^{0}
\end{eqnarray*}
From the above decomposition, we realize that this example falls in case 2 and we obtain $c=3$. The table below shows the corresponding indexes of the $w$-inequalities of each bloc $i_j$, $j=1,2,3.$
\begin{center}
\begin{tabular}{|c|c|c|}
  \hline
  $i_1=5$ & $i_2=8$ & $i_3=12$ \\\hline
  $i_{l(1)}=7$ & $i_{l(2)}=9$ & $i_{l(3)}=15$ \\\hline
  $\theta(1)=6$ & $\theta(2)=11$ & $\theta(3)=16$ \\
  \hline
\end{tabular},
\end{center}
the total number of inequalities is $m=1+2*6+9=22.$\\

Then we get the decomposition
\medskip

\noindent
\begin{tabular}{ccccc}
\hline
  level\; 1 & level\; 2 & level\; 3 & level\; 4 & level\; 5\\\hline
  $w_{1}^{2} \leq ut$ & $w_{2}^{2} \leq w_{1}t$ & $w_{3}^{2} \leq w_{2}t$& $w_{4}^{2} \leq w_{3}t$ & $w_{5}^{2} \leq w_{4}t$ \\
   & &  &  & \\
\end{tabular}

\noindent
\begin{tabular}{ccc}
  & Bloc $i_1$ & \\\hline
  level\; 6 & level\; 7 & level\; 8\\\hline
  $ w_{6}^{2} \leq ut $& $w_{8}^{2} \leq w_{6}u$ & $w_{10}^{2} \leq w_{8}w_{9}$\\
  $w_{7}^{2} \leq w_{5}x$ & $w_{9}^{2} \leq w_{7}x $ &\\
  & & \\
\end{tabular}

\noindent
\begin{tabular}{ccc}
& Bloc $i_2$ & \\\hline
level\; 8 & level\; 9 & level\; 10 \\\hline
 $w_{10}^{2} \leq w_{8}w_{9}$ & $w_{11}^{2} \leq ut$ & $w_{13}^{2} \leq w_{11}w_{12}$  \\
 & $w_{12}^{2} \leq w_{10}x$ & \\
 & &\\
\end{tabular}

\noindent
\begin{tabular}{cc}
\hline
level\; 11 & level\; 12 \\\hline
 $w_{14}^{2} \leq w_{13}t$ & $w_{15}^{2} \leq w_{14}x$ \\
  & \\
\end{tabular}

\noindent
\begin{tabular}{cccc}
& & \hspace*{-1.9cm}Bloc $i_3$ & \\\hline
level\; 13 & level\; 14 & level\; 15 & level\; 16\\\hline
$w_{16}^{2} \leq ut$ & $w_{18}^{2} \leq w_{16}t$ & $w_{20}^{2} \leq w_{18}t$ & $w_{22}^{2} \leq w_{20}w_{21}$\\
$w_{17}^{2} \leq w_{15}x$ &$w_{19}^{2} \leq w_{17}x$ & $w_{21}^{2} \leq w_{19}x$ &  \\
& & & \\
\end{tabular}

\begin{tabular}{c}
\hline
level\; 17\\\hline
$ x^{2} \leq w_{22}u$\\
\\
\end{tabular}

\noindent From that set of inequalities one can easily obtain the original inequality by expanding backward, starting from the last level (level 17). Indeed,
\medskip

\noindent
\begin{tabular}{cccc}
\hline
  level\; 17 & level\; 16 & level\; 15 & level\; 14 \\\hline
  $x^{2} \leq w_{22}u$ & $x^{2^{2}} \leq u^2w_{20}w_{21}$ & $x^{2^{3}} \leq u^4txw_{18}w_{19}$& $x^{2^{4}} \leq u^8t^3x^3w_{16}w_{17}$ \\
   & & &\\
\end{tabular}

\noindent
\begin{tabular}{ccc}
\hline
  level\; 13 & level\; 12 & level\; 11 \\\hline
  $x^{2^{5}} \leq u^{17}t^7x^7w_{15}$ & $ x^{2^{6}} \leq u^{34}t^{14}x^{15}w_{14} $& $x^{2^{7}} \leq u^{68}t^{29}x^{30}w_{13}$  \\
  & & \\
\end{tabular}

\noindent
\begin{tabular}{ccc}
\hline
 level\; 10 & level\; 9 & level\; 8 \\\hline
  $x^{2^{8}} \leq u^{136}t^{58}x^{60}w_{11}w_{12}$ & $x^{2^{9}} \leq u^{273}t^{117}x^{121}w_{10}$ & $x^{2^{10}} \leq u^{546}t^{234}x^{242}w_{8}w_{9}$  \\
  & & \\
\end{tabular}

\noindent
\begin{tabular}{ccc}
\hline
 level\; 7 & level\; 6 & level\; 5 \\\hline
  $x^{2^{11}} \leq u^{1093}t^{468}x^{485}w_{6}w_{7}$ & $x^{2^{12}} \leq u^{2187}t^{937}x^{971}w_{5}$ & $x^{2^{13}} \leq u^{4375}t^{1874}x^{1942}w_{4}$ \\
  & & \\
\end{tabular}

\noindent
\begin{tabular}{cccc}
\hline
level\; 4 & level\; 3 & level\; 2 \\\hline
  $x^{2^{14}} \leq u^{8750}t^{3749}x^{3884}w_{3}$ & $x^{2^{15}} \leq u^{17500}t^{7499}x^{7768}w_{2}$ & $x^{2^{16}} \leq u^{35000}t^{14999}x^{15536}w_{1}$\\
  & & &\\
\end{tabular}
\noindent \\
\begin{tabular}{c}
\hline
level\; 1\\\hline
$ x^{2^{17}} \leq u^{70001}t^{29999}x^{31072}$
\end{tabular}

\end{example}

\begin{remark}
The particular case of the Euclidean norm ($\tau=2$) leads to a simpler representation based on a direct application of Schur complement.

%
%

\noindent Observe that the constraint $z_{i}^{2}\geq\omega_{i}^{2}\|x-a_{i}\|_{2}^{2}=\omega_{i}^{2}\displaystyle\sum_{j=1}^{d}(x_{j}-a_{ij})^{2},\;i=1,...,n$ can be written as  $L_i\succeq 0$, being
$$
L_{i}=\left(
  \begin{array}{cccc}
    z_{i}-(x_{1}-a_{i1})  & x_{2}-a_{i2} & \cdots & x_{d}-a_{id} \\
    x_{2}-a_{i2}  & z_{i}+(x_{1}-a_{i1}) &        & 0 \\
    \vdots &       & \ddots &  \\
    x_{d}-a_{id}  & 0     &        & z_{i}+(x_{1}-a_{i1}) \\
  \end{array}
\right).
$$
(Recall that for a symmetric matrix $A$, $A\succeq 0$ means $A$ to be positive semidefinite.)
%
%
\end{remark}

\bigskip

Next, based on Lemma \ref{le:n2} we can state the final representation result for the family of convex ordered continuous single facility location problems.

\begin{theorem} \label{t:teo2}
For any set of lambda weights satisfying $\lambda_{1}\geq . . . \geq\lambda_{n}$ and $\tau=\frac{r}{s}$  such that $r,s\in\N\setminus\{0\}$, $r>s$ and $\gcd(r,s)=1$, Problem \eqref{pb1} can be represented as a semidefinite programming problem with  $n^2+n(2d+1)$ linear constraints and at most $4nd \log r$ positive semidefinite constraints.
\end{theorem}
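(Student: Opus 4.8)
The plan is to chain the two reformulations already proved and then count the constraints by type. First I would apply Theorem \ref{th:rep} to replace Problem \eqref{pb1} by the equivalent system \eqref{genpb}--\eqref{eq:n1-4}, and split its constraints into linear ones and the genuinely nonlinear power constraints \eqref{eq:n1-3}. The linear count is immediate: the assignment-dual inequalities $v_i+w_k\ge\lambda_k z_i$ contribute $n^2$, the two families \eqref{eq:n1-1}--\eqref{eq:n1-2} contribute $2nd$, and the budget inequalities $\omega_i^{r/s}\sum_j u_{ij}\le z_i$ contribute $n$, for a total of $n^2+2nd+n=n^2+n(2d+1)$, matching the claim (the sign constraints $u_{ij}\ge0$ being subsumed as variable bounds).

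Next I would treat the $nd$ constraints \eqref{eq:n1-3}, each of the shape $y_{ij}^r\le u_{ij}^s z_i^{r-s}$. Since \eqref{eq:n1-1}--\eqref{eq:n1-2} force $y_{ij}\ge|x_j-a_{ij}|\ge0$, and $u_{ij},z_i\ge0$, each such constraint is exactly an instance of \eqref{eq:ineq1} with $(x,u,t)=(y_{ij},u_{ij},z_i)$. Applying Lemma \ref{le:n2} to each pair $(i,j)$ separately, and introducing a fresh block of auxiliary variables $w$ for each, I replace it by the corresponding system \eqref{sys1} or \eqref{sys1.2}, which by the lemma consists of $m\le 2k$ inequalities, where $k$ is the unique integer with $2^{k-1}<r\le2^k$.

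The decisive step is the observation that every inequality produced by Lemma \ref{le:n2} has the form $w^2\le ab$ with $a,b\ge0$: each right-hand side is a product of exactly two of the nonnegative quantities among the $w_\ell,u_{ij},z_i,y_{ij}$, because the exponent sums $\alpha_i+\beta_i+\gamma_i$ occurring equal $2$ in the first inequality and $1$ in the recursive ones, while the closing inequality $x^2\le w\,d$ is likewise a two-factor product. Such a rotated second-order cone inequality is SDP-representable by the single $2\times2$ linear matrix inequality $\bigl(\begin{smallmatrix} a & w \\ w & b\end{smallmatrix}\bigr)\succeq0$, whose positive semidefiniteness is equivalent to $a,b\ge0$ and $ab\ge w^2$. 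Hence each of the $nd$ power constraints becomes at most $2k$ positive semidefinite constraints.

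Finally I would assemble the count. From $2^{k-1}<r$ we get $k-1<\log_2 r$, and since $r\ge2$ forces $\log_2 r\ge1$, it follows that $k<\log_2 r+1\le 2\log_2 r$, so $2k<4\log_2 r$. Therefore the total number of semidefinite constraints is at most $nd\cdot 2k\le 4nd\log r$, while the objective $\sum_k v_k+\sum_i w_i$ together with all remaining constraints are linear; they jointly constitute a semidefinite program of the asserted size. I expect the only genuinely delicate point to be the uniform claim that every right-hand side is a two-factor product, so that the $2\times2$ matrix encoding applies without exception; this is precisely what the exhaustive parity and base-$2$ case analysis of Lemma \ref{le:n2} guarantees, after which the SDP encoding and the counting are routine.
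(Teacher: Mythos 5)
Your proof is correct and takes essentially the same route as the paper: reformulate via Theorem \ref{th:rep}, count the $n^2+n(2d+1)$ linear constraints, decompose each power constraint \eqref{eq:n1-3} via Lemma \ref{le:n2} into inequalities of the form $a^2\le bc$ with $b,c\ge 0$, encode each as a linear matrix inequality, and tally at most $4nd\log r$ of them. The only (harmless) difference is that you encode $a^2\le bc$ by the $2\times 2$ matrix $\bigl(\begin{smallmatrix} b & a\\ a & c\end{smallmatrix}\bigr)\succeq 0$ where the paper uses a $3\times 3$ Schur-complement form; both are valid, and your bookkeeping ($m\le 2k$ with $2k<4\log_2 r$) actually justifies the stated bound more carefully than the paper's own count.
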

\begin{proof}
Using Theorem \ref{th:rep}  we have that Problem (\ref{pb1}) is equivalent to
\begin{eqnarray}\label{pb:th2}
        \min& \displaystyle \sum_{k=1}^{n}v_{k}+\sum_{i=1}^{n}w_{i} \\
        s.t &  v_{i}+w_{k}\geq\lambda_{k}z_{i} , & \forall i,k=1,...,n, \label{in:pbth2-1}  \\
& y_{ij}-x_j+a_{ij}\ge 0,& \forall i=1,...,n,\; j=1,...,d.\\
& y_{ij}+x_j-a_{ij} \ge 0,& \forall i=1,...,n,\; j=1,...,d.\nonumber\\
& y_{ij}^r\leq u_{ij}^{s}z_{i}^{r-s},& \forall i=1,...,n,\; j=1,...,d, \label{in:pbth2-4}  \\
& \omega_{i}^{\frac{r}{s}}\sum_{j=1}^d u_{ij}\le z_i,& \forall i=1,...,n,\;  \label{in-pbth2-5} \\
& u_{ij}\ge 0,& \forall\; i=1,...,n,\;  j=1,\ldots,d. \label{in-pbth2-6}
\end{eqnarray}
Then, we use Lemma \ref{le:n2}  to represent each one of the inequalities (\ref{in:pbth2-4}) for each $i,j$, as a system of at most $2\log r$ inequalities of the form (\ref{sys1}) or (\ref{sys1.2}). Next, we observe that all the inequalities that appear in (\ref{sys1}) or (\ref{sys1.2}) are of the form $a^2\le bc$, involving 3 variables, $a,b,c$ with $b,c$ non negative. Finally, it is well-known   by Schur complement that $$ a^2\le bc \quad \Leftrightarrow \left(\begin{array}{ccc} b+c  & 0 & 2a \\ 0 & b+c & b-c \\ 2a & b-c & b+c \end{array} \right) \succeq 0, \; b+c\ge 0.$$ Hence, Problem \eqref{pb1} is a SDP because it has a linear objective function, $n^2+n(2d+1)$ linear inequalities and at most $2nd\log(r)$ linear matrix inequalities.
\end{proof}
\fin

There is an interesting observation that follows from  the above result. It was already known that continuous convex ordered location problems with $\ell_1$ norm were reducible to linear programming (see e.g. \cite{NP05}). This paper proves that most continuous convex ordered location problems with $\ell_p$ norms  are reducible to SDP programming showing the similarities existing between all this class of problems and moreover that convex continuous single facility location problems are among the \textit{``easy''} optimization problems.
Moreover, Theorem \ref{t:teo2} allows us to apply the general theory of SDP to derive a general result of convergence for solving the  family of continuous convex ordered single facility location problems: Problem \eqref{pb1} is polynomially solvable in dimension $d\in \mathbb{N}$ and for any set of nonincreasing lambda weights. Moreover, we can be more precise and can state the following result:

\begin{theorem}
Let $\varepsilon>0$ be a prespecified accuracy and $(X^0,S^0)$ be a feasible primal-dual pair of initial solutions of Problem (\ref{pb:th2}-\ref{in-pbth2-6}). An optimal primal-dual pair $(X,S)$ satisfying $X\cdot S\le \varepsilon$ can be obtained in at most $O(\alpha \log \frac{X^0\cdot S^0}{\varepsilon})$ iterations and the complexity of each iteration is bounded above by $O(\alpha \beta^3,\alpha^2 \beta^2,\alpha^2)$ being $\alpha=3n+2nd(1+\log r)$ and $\beta=p$, the dimension  of the dual matrix variable $S_p$.
\end{theorem}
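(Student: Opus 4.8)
The plan is to read the statement off directly from the generic polynomial-time complexity theory of primal-dual path-following interior-point methods for semidefinite programming, once the location problem has been put in standard conic form. By Theorem \ref{t:teo2}, Problem \eqref{pb1} is equivalent to the semidefinite program \eqref{pb:th2}--\eqref{in-pbth2-6}, whose only nonlinear constraints are the $3\times 3$ linear matrix inequalities produced by the Schur-complement step, together with finitely many linear inequalities. Hence there is nothing problem-specific to prove about the convergence mechanism itself: the whole assertion follows by specializing a standard interior-point complexity theorem to this instance and bookkeeping the two size parameters $\alpha$ and $\beta=p$.

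First I would cast \eqref{pb:th2}--\eqref{in-pbth2-6} in standard primal-dual conic form. Introducing a slack for each linear inequality places the nonnegative-orthant constraints and the semidefinite blocks on an equal footing, and stacking all the $3\times 3$ blocks into a single block-diagonal matrix variable yields one conic program $\min\, c\cdot X$ subject to linear equations and $X\succeq 0$, with dual slack $S$; the dimension of the resulting block-diagonal dual matrix variable $S_p$ is $\beta=p$. Next I would count the governing parameter $\alpha$: by Lemma \ref{le:n2} each inequality $y_{ij}^{\,r}\le u_{ij}^{s}z_i^{\,r-s}$ of \eqref{in:pbth2-4} is replaced by at most $2\log r$ scalar inequalities $a^2\le bc$, each becoming one $3\times 3$ positive semidefinite block, so that together with the linear constraints of \eqref{pb:th2}--\eqref{in-pbth2-6} the total number of orthant-plus-semidefinite blocks entering the self-concordant barrier parameter and the Newton system is $\alpha=3n+2nd(1+\log r)$.

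With the problem in this form, I would invoke the standard short-step (equivalently, predictor--corrector) path-following complexity theorem. Starting from the given feasible pair $(X^0,S^0)$, the method drives the duality gap from $X^0\cdot S^0$ down to a pair with $X\cdot S\le\varepsilon$ in $O\!\big(\alpha\log\tfrac{X^0\cdot S^0}{\varepsilon}\big)$ iterations, while the dominant work per iteration is assembling and factoring the Schur-complement system, costing $O(\alpha\beta^3,\alpha^2\beta^2,\alpha^2)$ arithmetic operations with $\beta=p$. This is precisely the claimed bound.

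The only genuine effort lies in the bookkeeping of the second step: aggregating the many tiny $3\times 3$ blocks coming from Lemma \ref{le:n2} together with the orthant constraints into one standard-form cone, and verifying that the block/barrier count collapses exactly to $\alpha=3n+2nd(1+\log r)$ and the dual matrix dimension to $\beta=p$. I expect this counting, together with checking that the hypotheses of the cited theorem hold --- in particular the existence of a strictly feasible interior starting pair, which is why the statement is phrased with $(X^0,S^0)$ assumed given --- to be the main (though routine) obstacle; the iteration count and per-iteration arithmetic are then off-the-shelf consequences of interior-point theory rather than anything re-derived here.
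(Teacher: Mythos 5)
Your proposal takes essentially the same route as the paper: the paper in fact offers no standalone proof of this theorem, noting immediately afterwards that the bound rests on general complexity results for primal-dual interior-point algorithms (the Koulaei--Terlaky modification \cite{KT07} of the Mehrotra-type algorithm \cite{Meh92}) applied to the SDP formulation from Theorem \ref{t:teo2}, which is exactly the ``specialize a generic path-following complexity theorem and do the bookkeeping'' argument you give. Your identification of $\alpha$ with the count of linear constraints plus the $3\times 3$ Schur-complement blocks produced via Lemma \ref{le:n2}, and of $\beta=p$ with the dimension of the block-diagonal dual matrix variable, is the same reading of the size parameters that the paper intends.
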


The reader may observe that this result is mainly of theoretical interest because the bound is based on general results on primal-dual algorithms, such as the modification of Kouleai and Terlaki \cite{KT07} to the  Mehrota  type algorithm \cite{Meh92} applied to SDP problems. Nevertheless, it is important to realize that it states an important difference with respect to any other known result in the area of continuous location where convergence results, when available, are only proven  limit of sequences and never in finite number of steps nor accuracy ensured. In this case, one can ensure a prespecified accuracy of the solution in a known number of iterations.

\section{Constrained Ordered median problems\label{s:constrained}}

This section extends the above results to constrained location problems. Therefore, we address now  the restricted case of Problem (\ref{pb1}). Let $\{g_{1}, \ldots, g_l\} \subset \mathbb{R}[x]$ be real polynomials and $\mathbf{K}:=\{x\in \mathbb{R}^{d}: g_{j}(x)\geq 0,\: j=1,\ldots ,l \}$ a basic
closed, compact semialgebraic set with nonempty interior satisfying the Archimedean property. Recall that the Archimedean property is equivalent to impose that for some $M>0$ the quadratic polynomial $u(x)=M-\sum_{i=1}^d x_i^2$ has a representation on $\mathbf{K}$ as $u\,=\,\sigma _{0}+\sum_{j=1}^{\ll}\sigma _{j}\,g_{j}$, for some  $\{\sigma_0, \ldots, \sigma_l\}\subset \mathbb{R}[x]$ being each $\sigma_j$ sum of squares \cite{putinar}. We remark that the assumption on the Archimedean property is not restrictive at all, since any semialgebraic  set $\mathbf{K}\subseteq \R^d$ for which is known that $\sum_{i=1}^d x_i^2 \leq M$ holds for some $M>0$ and for all $x\in \mathbf{K}$, admits a new representation $\mathbf{K'} = \mathbf{K} \cup \{x\in \mathbb{R}^{d}: g_{l+1}(x):=M-\sum_{i=1}^d x_i^2\geq 0\}$ that trivially verifies  the Archimedean property. In our framework the compactness assumption which is usually assumed in location analysis implies that this condition always holds.

In this framework we assume that the domain $\mathbf{K}$ is  compact and has nonempty interior. We observe that we can extend the  results in Section \ref{s:2} to a broader class of convex constrained problems.

In order to do that we need to introduce some notation. Let $\mathbf{\kappa}=({\kappa}_\alpha)$ be a real sequence
indexed in the monomial basis $(x^\beta z^\gamma v^\delta w^\zeta u^\eta y^\theta)$ of $\mathbb{R}[x,z,v,w,u,y]$ (with $%
\alpha=(\beta,\gamma,\delta,\zeta,\eta,\theta)\in\mathbb{N}^d\times\N^{n}\times\N^{n}\times\N^{n}
\times\N^{n\times d}\times\N^{n\times d}$). Let $D=3n+(2n+1)d$ denote the dimension of the space of variables. Define $\Upsilon= (x,z,v,w,u,y)$ to be the vector of indeterminates so that $\Upsilon^\alpha=x^\beta z^\gamma v^\delta w^\zeta u^\eta y^\theta$. For any integer $N$  consider the monomial vector
$$ [\Upsilon^N]=[(x,z,v,w,u,y)^N]=[1, x_1,\ldots x_d, z_1, \ldots z_n, \ldots, y_{nd}, x_1^2, x_1x_2 ,\ldots, y_{nd}^N]^t.
$$
Then, $[\Upsilon^N][\Upsilon^N]^t$ is a square matrix and we write
$$[\Upsilon^N][\Upsilon^N]^t=\sum_{0\le |\alpha| \le 2N} A_\alpha^0 \Upsilon^\alpha$$
for some symmetric $0/1$ matrices $A^0_\alpha$. Here, for a vector $\alpha$, $|\alpha|$ stands for the sum of its components.

For any sequence, $\mathbf{\kappa}=(\kappa_\alpha)_{\alpha \in \N^{D}}\subset\mathbb{R}$, indexed in the canonical monomial basis $\B$,  let $\L_\mathbf{\kappa}:\mathbb{R}[\Upsilon]\to\mathbb{R}$ be the linear functional defined, for any $f=\sum_{\alpha\in\mathbb{N}^d}f_\alpha\,\Upsilon^\alpha \in \R[\Upsilon]$, as $\L_\mathbf{\kappa}(f) :=
\sum_{\alpha}f_\alpha\,\kappa_\alpha$.

The \textit{moment} matrix $\Mo_N(\mathbf{\kappa})$ of order $N$ associated with $\mathbf{\kappa}$, has its rows and
columns indexed by $(\Upsilon^\alpha)$ and $\Mo_N(\mathbf{\kappa})(\alpha,\alpha')\,:=\,\L_{\mathbf{\kappa}}(\Upsilon^{\alpha+\alpha'})\,=\,\kappa_{%
\alpha+\alpha'}$, for $\vert\alpha\vert,\,\vert\alpha'\vert\,\leq N.$
Therefore,
$$ \Mo_N(\mathbf{\kappa})=\sum_{0\le |\alpha| \le 2N} A_\alpha^0 \kappa_\alpha$$

Note that the moment matrix of order $N$ has dimension ${{D+N} \choose {D} }\times {{D+N} \choose {D}}$ and that there are ${{D+2N} \choose {D}}$ $\mathbf{\kappa}_\alpha$ variables.

For $g\in \mathbb{R}[\Upsilon] \,(=\sum_{\nu \in \N^{M}}  g_{\nu} \Upsilon^\nu$), the \textit{localizing%
} matrix $\Mo_N(g \mathbf{\kappa})$ of order $N$ associated with $\mathbf{\kappa}$
and $g$, has its rows and columns indexed by $(\Upsilon^\alpha)$ and $\Mo_N(g\mathbf{\kappa})(\alpha,\alpha'):=\L_\mathbf{\kappa}(\Upsilon^{\alpha+\alpha'}%
g(\Upsilon))=
\sum_{\nu}g_\nu \kappa_{\nu+\alpha+\alpha'}$, for $\vert\alpha\vert,\vert\alpha'\vert\,\leq N$. Therefore,
$$ \Mo_N(g\mathbf{\kappa})=\sum_{0\le |\alpha| \le 2N} A_\alpha^g \kappa_\alpha,$$
for some symmetric $0/1$ matrices $A^g_\alpha$ that depend on the polynomial $g$. Also for convenience, we shall denote by $A_{e_i}^g$ the matrix associated with $\kappa_{e_i}$ the  moment variable linked to the monomial $x^{e_i}=x_i^1$. (The interested reader is referred to \cite{lasserre22} and \cite{lasserrebook} for further details on the moment approach applied to global optimization.)


\begin{theorem} \label{t:convex}
Consider the restricted problem:
\begin{equation}\label{pb2}
\min_{x\in\mathbf{K}\subset\R^{d}}\sum_{i=1}^{n}\lambda_{i}\omega_{\sigma(i)}\|x-a_{\sigma(i)}\|_{\tau}.
\end{equation}
Assume that the hypothesis of Theorem \ref{t:teo2} holds. In addition, any of the following conditions hold:
\begin{enumerate}
\item $g_i(x)$ are concave for $i=1,\ldots,\ell$ and $-\sum_{i=1}^{\ell} \mu_i \nabla^2 g_i(x) \succ 0$ for each dual pair $(x,\mu)$ of the problem of minimizing any linear functional $c^tx$ on $\mathbf{K}$ (\textit{Positive Definite Lagrange Hessian} (PDLH)).
\item $g_i(x)$ are sos-concave on $\mathbf{K}$ for $i=1,\ldots,\ell$ or $g_i(x)$ are concave on $\mathbf{K}$ and strictly concave on the boundary of $\mathbf{K}$ where they vanish, i.e. $\partial \mathbf{K}\cap \partial \{x\in \mathbb{R}^d: g_i(x)=0\}$, for all $i=1,\dots,\ell$.
\item $g_i(x)$ are strictly quasi-concave on $\mathbf{K}$ for $i=1,\ldots,\ell$.
\end{enumerate}
Then, there exists a constructive  finite dimension embedding, which only depends on  $\tau$ and $g_i$, $i=1,\ldots,\ell$, such that (\ref{pb2}) is a semidefinite problem.
\end{theorem}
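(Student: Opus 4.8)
The plan is to read \eqref{pb2} as the minimization of a \emph{linear} functional over the intersection of two convex bodies: the projected spectrahedron produced by Theorem \ref{t:teo2} for the (convex) ordered median objective, and the feasibility set $\mathbf{K}$. The objective-side factor is already exactly handled, so the entire burden is to produce an exact finite semidefinite representation of $\mathbf{K}$, and this is precisely what each of the three hypotheses delivers through the moment/localizing apparatus set up above. Concretely, I would first apply Theorem \ref{t:teo2} verbatim (the ordered median objective is convex, so its reformulation is tight), turning \eqref{pb2} into the minimization of $\sum_k v_k+\sum_i w_i$ subject to the linear inequalities and the Schur-complement LMIs of Theorem \ref{t:teo2}, together with the new constraints $g_j(x)\ge 0$, $j=1,\dots,\ell$. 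Writing $F\subset\R^{D}$ for the projected spectrahedron cut out by the objective-side constraints (exactly SDP-representable by Theorem \ref{t:teo2}), the feasible set becomes $F\cap(\mathbf{K}\times\R^{D-d})$, so it suffices to represent $\mathbf{K}$ as a projection of a spectrahedron.

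Next I would represent $\mathbf{K}$ via the moment approach. Since $\mathbf{K}$ is compact, convex (each of conditions 1--3 forces convexity) and has nonempty interior, each hypothesis is exactly a sufficient condition in the Helton--Nie / Lasserre theory guaranteeing that the Lasserre relaxation of $\mathbf{K}$, built from the moment matrix $\Mo_N(\kappa)$ and the localizing matrices $\Mo_{N-\lceil\deg g_j/2\rceil}(g_j\kappa)$ introduced above, is \emph{exact} at some finite order $N^\ast$ depending only on the $g_i$. The cases split as expected: sos-concavity yields an exact representation directly; strict concavity on the vanishing part of $\partial\mathbf{K}$, respectively strict quasi-concavity, yields a positive-curvature localization argument; and the PDLH condition supplies the nondegeneracy of the Lagrange Hessian at the active boundary. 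At this step I would cite these three results and only verify that their standing assumptions (compactness, nonempty interior, and the stated curvature/definiteness) hold here.

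Then I would assemble the final program. Stacking the LMIs defining $F$ (whose number and size are governed by $\tau$, since there are $O(nd\log r)$ Schur blocks by Theorem \ref{t:teo2}) with the moment and localizing LMIs of order $N^\ast$ that represent $\mathbf{K}$ produces a single block-diagonal linear matrix inequality; minimizing the linear objective over it is a semidefinite program whose data depend only on $\tau$ and on the $g_i$, which is exactly the claimed constructive finite-dimensional embedding.

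The hard part will be the finite-exactness step, and in particular checking that intersecting $\mathbf{K}$ with the convex constraints coming from $F$ does not spoil the representation. The norm-side constraints $y_{ij}^{r}\le u_{ij}^{s}z_i^{r-s}$ are \emph{not} concave as written, so one cannot feed the whole constraint system into a single Helton--Nie hypothesis. The clean route is to keep the objective side exactly representable through Theorem \ref{t:teo2} (its LMIs are already exact, so no hierarchy is needed there) and to apply the finite-convergence theorem \emph{only} to $\mathbf{K}$, then take the product of the two representations. The delicate point is to justify this separation rigorously, i.e. to argue that exactness of the representation of $\mathbf{K}$ survives the lifting and the intersection with $F$, so that an optimal moment sequence of the combined relaxation flattens to an atomic (Dirac) measure supported at a genuine minimizer of \eqref{pb2}.
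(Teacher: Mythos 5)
Your overall route coincides with the paper's: Theorem \ref{t:teo2} disposes of the objective exactly, each of conditions 1--3 is invoked as a Helton--Nie/Lasserre certificate that $\mathbf{K}$ admits an exact lifted SDP representation at some finite order depending only on the $g_i$, and the two LMI systems are stacked into a single program. (The paper's instantiations, which your sketch leaves generic, are: condition 1 via the PP-BDNR property, Theorem 6 of \cite{HelNie2012}; condition 2 via Theorem 11.11 of \cite{lasserrebook}; condition 3 via Proposition 10 of \cite{HelNie2012}, replacing the $g_i$ by polynomials $-p_i$ with positive definite Hessians, followed by Theorem 24 of \cite{HelNie2012}; and Theorem 29 of \cite{HelNie2012} for the uniform bound on $N$.)

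The one genuine gap is the ``delicate point'' you leave open at the end, and it is also misdiagnosed: no flatness or atomic-measure extraction is needed in this theorem at all --- that machinery belongs to the non-convex hierarchy of Theorem \ref{t:th5}, where the relaxation can be inexact at finite order. Here the separation you worry about is justified in one line, because the two liftings share only the variables $x$. The objective-side block of Theorem \ref{t:teo2} involves $(x,z,v,w,u,y)$ and the auxiliary variables of Lemma \ref{le:n2}; the constraint-side block involves $(x,\kappa)$ only, tied to the rest through $L_{\kappa}(x_j)=x_j$. Hence any point of the stacked system has $x\in\pi_x(\hat S_N)=\mathbf{K}$, and conversely any $x\in\mathbf{K}$ can be completed independently in the two blocks: the objective block is feasible for every $x\in\R^{d}$ (take $y_{ij}=|x_j-a_{ij}|$, $z_i=\omega_i\|x-a_i\|_{\tau}$, etc.), and $(x,\kappa)\in\hat S_N$ for some $\kappa$ by exactness of the representation of $\mathbf{K}$. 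So the projection of the stacked feasible set onto $x$ is exactly $\mathbf{K}$, and minimizing the linear objective over it returns $\min_{x\in\mathbf{K}}\sum_{i}\lambda_{i}\omega_{\sigma(i)}\|x-a_{\sigma(i)}\|_{\tau}$ by the exactness already established in Theorem \ref{th:rep}. In particular, your concern about the non-concave constraints $y_{ij}^{r}\le u_{ij}^{s}z_{i}^{r-s}$ is moot: no Helton--Nie hypothesis is ever applied to them, since they are exactly SDP-representable via Lemma \ref{le:n2} and the Schur complement, exactly as in the unconstrained case.
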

\begin{proof}
The unconstrained version of Problem (\ref{pb2}) can be equivalently written as a SDP using the result in Theorem \ref{t:teo2}. Therefore, it remains to prove that under the conditions 1, 2 or 3 the constraint set $x\in \mathbf{K}$ is also exactly represented as a finite number of semidefinite constraints or equivalently that it is semidefinite representable (SDr).

Let us begin with condition 1. Consider the system of linear matrix inequalities:
\begin{equation} \label{sys:C1}
A_0^{(k)}+\sum_{i=1}^{\ell} A_{e_i}^{g_k} x_i +\sum_{1\le \alpha\le 2N} A_{\alpha}^{g_k} \kappa_{\alpha}\succeq 0, \qquad k=0,\ldots,\ell.
\end{equation}
Under the hypothesis of condition 1, the set $\mathbf{K}$ satisfies the Putinar-Prestel's Bounded Degree Nonnegative Representation property (PP-BDNR), see \cite[Theorem 6]{HelNie2012}. This condition ensures that there exists a finite $N$ such that the set
$$ \hat S_N=\{(x,\kappa): \mbox{ satisfying inequalites (\ref{sys:C1})}\}$$
projects via the $x$ coordinate onto the set $\mathbf{K}$. Hence, an exact lifted representation of Problem (\ref{pb2}) is the one provided by Theorem \ref{t:teo2} augmented with the additional linear matrix inequalities in (\ref{sys:C1}).

Let us assume now that condition 2 holds. Consider the set\\
$$ \hat S_N=\{(x,\kappa): \Mo_{N}(\mathbf{\kappa})\succeq 0, \; L_{\mathbf{\kappa}}(g_i)\ge 0, \; i=1\ldots,\ell, \; L_{\mathbf{\kappa}}(x_j)=x_j,\; j=1,\ldots,d,\; \kappa_0=1\}.$$

Under our hypothesis, Theorem 11.11 in \cite{lasserrebook} ensures that there exists a finite $N$ such that $\hat S_N$ projects via the $x$ variables onto the set $\mathbf{K}$. Hence, we obtain another lifted exact SDP formulation for Problem (\ref{pb2}) using the formulation induced by Theorem \ref{t:teo2} augmented with the  inequalities $\Mo_{N}(\mathbf{\kappa})\succeq 0, \; L_\mathbf{\kappa}(g_i)\ge 0, \; i=1\ldots,\ell, \; L_\mathbf{\kappa}(x_j)=x_j,\; j=1,\ldots,d,\; \kappa_0=1$.

Finally, let us consider the case in condition 3. If $g_i$ are strictly quasi-concave on $\mathbf{K}$, Proposition 10 in \cite{HelNie2012} implies that one can find some new polynomials $-p_i$ that have positive definite Hessian in $\mathbf{K}$. Let us denote $P:=\{x\in \mathbb{R}^d: p_i(x)\ge 0,\; i=1,\dots,\ell\}$. Thus, in some open set $U$ containing $\mathbf{K}$ it holds $ P\cap U=\mathbf{K}$.

Next, define the set
$$ \hat S_N=\{(x,\mathbf{\kappa}): \mbox{ satisfying inequalites (\ref{sys:C3-1})-(\ref{sys:C3-4})}\}$$
where the set of   linear matrix inequalities (\ref{sys:C3-1})-(\ref{sys:C3-4}) are given by:
\begin{eqnarray}
A_0^{(k)}+\sum_{i=1}^{\ell} A_{e_i}^{(k)} x_i +\sum_{1\le \alpha\le 2N} A_{\alpha}^{p_k} \kappa_{\alpha}\succeq 0, & k=0,\ldots,\ell \label{sys:C3-1}\\
L_{\mathbf{\kappa}}(p_k)\ge 0, & k=0,\ldots,\ell \label{sys:C3-2}\\
L_{\mathbf{\kappa}}(x_j)=x_j, & j=1,\ldots,d \label{sys:C3-3}\\
\kappa_0=1. \label{sys:C3-4}&
\end{eqnarray}

Under the hypothesis of condition 3, Theorem 24  in \cite{HelNie2012} ensures that there exists a finite $N$ such that $\hat S_N$ projects via the $x$ variables onto $\mathbf{K}$. Hence, we obtain the third lifted exact SDP formulation for Problem (\ref{pb2}) using the formulation induced by Theorem \ref{t:teo2} augmented with the  inequalities (\ref{sys:C3-1})-(\ref{sys:C3-4}).

We observe that according to Theorem 29 in \cite{HelNie2012}, since we assume the Archimedean property holds in all these cases, $N$ can be bounded above by some finite constant that only depends on the polynomials $g_i$, $i=1,\ldots,\ell$.
\fin
\end{proof}

We shall finish this section with another convergence result applicable to the case of non-convex constrained location problems. Again, let $\{g_{1}, \ldots, g_l\} \subset \mathbb{R}[x]$ and $\mathbf{K}:=\{x\in \mathbb{R}^{d}: g_{k}(x)\geq 0,\: k=1,\ldots ,\ll \}$ a basic, compact,
closed semialgebraic set satisfying the Archimedean property, with nonempty interior and such that $\K$ does not satisfies the hypothesis of Theorem \ref{t:convex}, in particular some of the $g_j$ may not be concave.

Now, we can prove a convergence result that allows us to approximate, up to any degree of accuracy, the solution of the  class of problems defined in (\ref{pb2}) when the hypothesis of Theorem \ref{t:convex} fails.  Let $\xi_k:=\lceil (\mathrm{deg}\, g_k)/2\rceil$  where $\{g_1,\ldots,g_{\ell}\}$ are  the polynomial constraints that define $\mathbf{K}$. For $N\geq N_{0}:=%
\displaystyle\max \{\max_{k=1,\ldots,\ell} \xi_k,1\}$, we introduce the  following hierarchy of semidefinite
programs:

\begin{eqnarray}\label{pb:th5}
  (\mathbf{Q}_N):      \min& \displaystyle \sum_{k=1}^{n}v_{k}+\sum_{i=1}^{n}w_{i} \\
        s.t. &  v_{i}+w_{k}\geq\lambda_{k}z_{i} , & \forall i,k=1,...,n, \label{in:pbth5-1}  \\
& y_{ij}-x_j+a_{ij}\ge 0,& \forall i=1,...,n,\; j=1,...,d.\\
& y_{ij}+x_j-a_{ij} \ge 0,& \forall i=1,...,n,\; j=1,...,d.\nonumber\\
& y_{ij}^r\leq u_{ij}^{s}z_{i}^{r-s},& \forall i=1,...,n,\; j=1,...,d, \label{in:pbth5-4}  \\
& \omega_{i}^{\frac{r}{s}}\sum_{j=1}^d u_{ij}\le z_i,& \forall i=1,...,n,\; \\
& \Mo_{N}(\mathbf{\kappa})  \succeq 0, & \\
& \Mo_{N-\xi_k}(g_k,{\bf \kappa})  \succeq 0,& k=1,\ldots ,\ell, \\
& \qquad L_{\kappa}(x_j)=x_j, & j=1,\ldots,d, \nonumber \\
& \qquad L_{\kappa}(z_i)=z_i, & i=1,\ldots,n, \nonumber \\
& \qquad L_{\kappa}(v_i)=v_i, & i=1,\ldots,n, \nonumber \\
& \qquad L_{\kappa}(w_i)=w_i, & i=1,\ldots,n,  \nonumber \\
& \qquad L_{\kappa}(u_{ij})=u_{ij}, & i=1,\ldots,n, \;  j=1,\ldots,d,  \nonumber \\
& \qquad L_{\kappa}(y_{ij})=y_{ij}, & i=1,\ldots,n, \;  j=1,\ldots,d,  \nonumber \\
& \kappa_0=1 & \nonumber \\
& u_{ij}\ge 0,& \forall\; i=1,...,n,\;  j=1,\ldots,d.
\end{eqnarray}

with optimal value denoted $\min \mathbf{Q}_{N}$.

\begin{theorem}
\label{t:th5} Consider $\rho_{\lambda}$ defined as the optimal value of the problem:
\begin{equation}\label{pb5}
\rho_{\lambda}=\min_{x\in\mathbf{K}\subset\R^{d}}\sum_{i=1}^{n}\lambda_{i}\omega_{\sigma(i)}\|x-a_{\sigma(i)}\|_{\tau}.
\end{equation}
Then, with the notation above:

\textrm{(a)} $\min\mathbf{{Q}}_N\uparrow \rho_{\lambda}$ as $N\to\infty$.

\textrm{(b)} Let $\mathbf{\kappa}^N$ be an optimal solution of Problem $(\mathbf{Q}_N)$. If
\begin{equation*}  \label{finiteconv}
\mathrm{rank}\,\Mo_N(\mathbf{\kappa}^N)\,=\,\mathrm{rank}\,\Mo_{N-N_0}(\mathbf{\kappa}%
^r)\,=\,\vartheta
\end{equation*}
then $\min \mathbf{{Q}}_{N}=\rho_{\lambda}$ and one may extract $\vartheta$ points $$(x^*(i),z^*(i),v^*(i),w^*(i),u^*(i),y^*(i))_{i=1}^{\vartheta}\subset\mathbf{K},$$ all global minimizers of Problem (\ref{pb5}).
\end{theorem}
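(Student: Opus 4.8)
The plan is to recognize this as a standard Lasserre moment-SOS hierarchy convergence result, so the proof should reduce Theorem~\ref{t:th5} to the known general theory rather than reprove it from scratch. First I would observe that by Theorem~\ref{th:rep} the constrained problem \eqref{pb5} is \emph{equivalent} to a polynomial optimization problem: minimize the linear objective $\sum_k v_k + \sum_i w_i$ subject to the polynomial inequality constraints \eqref{in:pbth5-1}, \eqref{eq:n1-1}, \eqref{eq:n1-2}, \eqref{in:pbth5-4} (note $y_{ij}^r \le u_{ij}^s z_i^{r-s}$ is a genuine polynomial inequality), the weighted-sum constraint, $u_{ij}\ge 0$, together with the original defining constraints $g_k(x)\ge 0$ of $\mathbf{K}$. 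Thus the feasible region is a basic closed semialgebraic set in the lifted variables $(x,z,v,w,u,y)$, and the objective is a polynomial (indeed linear) function on it.

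The key point for part~(a) is that this lifted feasible set inherits the Archimedean property. Since $\mathbf{K}$ is compact and satisfies Archimedeanity by hypothesis, and since the auxiliary variables $z,v,w,u,y$ are all forced into bounded ranges by the constraints once $x$ ranges over the compact $\mathbf{K}$ (each $z_i$ is bounded because $z_i \ge \omega_i\|x-a_i\|_\tau$ with $x\in\mathbf{K}$ and, through the assignment-dual constraints $v_i+w_k\ge \lambda_k z_i$ at optimality, $v$ and $w$ are controlled; $u$ and $y$ are bounded via \eqref{in:pbth5-4} and the weighted-sum bound), one can append a redundant ball constraint to certify the Archimedean condition on the full lifted description exactly as described in the paragraph on $\mathbf{K}'$ before Theorem~\ref{t:convex}. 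With Archimedeanity in hand, part~(a) follows directly from Lasserre's convergence theorem for the moment hierarchy (the reference \cite{lasserre22,lasserrebook}): the sequence $\min\mathbf{Q}_N$ is monotone nondecreasing in $N$ because increasing $N$ tightens the moment/localizing constraints, each $\min\mathbf{Q}_N$ is a lower bound on $\rho_\lambda$ by weak duality of the relaxation, and $\min\mathbf{Q}_N \uparrow \rho_\lambda$ as $N\to\infty$. I would state this as a citation to the general theorem, verifying only that the hypotheses (linear objective, Archimedean feasible set) are met.

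For part~(b), the plan is to invoke the flat-extension / rank-stabilization criterion of Curto--Fialkow as packaged in Lasserre's book. When $\mathrm{rank}\,\Mo_N(\mathbf{\kappa}^N)=\mathrm{rank}\,\Mo_{N-N_0}(\mathbf{\kappa}^N)=\vartheta$, the truncated moment sequence $\mathbf{\kappa}^N$ admits a representing atomic measure supported on exactly $\vartheta$ points, all of which lie in the feasible set because the localizing matrix conditions $\Mo_{N-\xi_k}(g_k,\mathbf{\kappa})\succeq 0$ force the support into $\mathbf{K}$ (and similarly into the lifted feasible region). The linear constraints $L_\kappa(x_j)=x_j$, $L_\kappa(z_i)=z_i$, etc., identify the first-order moments with the actual coordinate values, so the atoms can be extracted as genuine feasible points $(x^*(i),\dots,y^*(i))$. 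Finiteness of convergence then gives $\min\mathbf{Q}_N=\rho_\lambda$, and each extracted atom is a global minimizer. I would cite the standard extraction procedure (e.g.\ via the eigenvalue method of Henrion--Lasserre) rather than reproduce it.

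The main obstacle I anticipate is \textbf{not} the hierarchy machinery itself, which is entirely standard, but rather the careful justification that the \emph{lifted} problem is a faithful reformulation whose feasible set is Archimedean. One must check that introducing the variables $z,v,w,u,y$ does not create unbounded feasible directions that would break compactness (and hence Archimedeanity): the dual variables $v,w$ of the assignment problem are, a priori, only bounded \emph{at optimality}, so to legitimately append a ball constraint one should argue either that they may be restricted to a compact box without changing the optimum, or that the moment-hierarchy convergence applies to the objective over the relevant bounded slice. Handling this bounding cleanly, while preserving equivalence with \eqref{pb5} via Theorem~\ref{th:rep}, is the delicate step; everything downstream is a direct appeal to \cite{lasserre22} and \cite{lasserrebook}.
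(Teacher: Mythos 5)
Your proposal matches the paper's own proof in essence: the paper likewise notes that an optimum of \eqref{pb5} exists by compactness of $\mathbf{K}$ and then disposes of (a) and (b) by direct citation of Lasserre's general results — the hierarchy convergence theorem for (a) and the rank/flat-extension extraction theorem for (b) (Theorems 5.6 and 5.7 of \cite{lasserrebook}) — applied to the lifted SDP formulation \eqref{pb:th2}--\eqref{in-pbth2-6} over $\mathbf{K}$. If anything you are more careful than the published proof: the point you flag as delicate, namely that the auxiliary variables $z,v,w,u,y$ must be argued bounded (or boxed without loss of optimality) so that the lifted feasible set inherits the Archimedean property before Lasserre's theorems can legitimately be invoked, is passed over in silence in the paper.
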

\begin{proof}
First of all, we observe that an optimal solution of Problem (\ref{pb5}) does exist by the compactness assumption on $\mathbf{K}$. Moreover, the convergence of the semidefinite sequence of problems $(\mathbf{Q}_N)$ follows from a result by  Lasserre \cite[Theorem 5.6]{lasserrebook} that it is applied here to the SDP problem (\ref{pb:th2}-\ref{in-pbth2-6}) on the closed semialgebraic set $\mathbf{K}$. The second assertion on the rank condition, for extracting optimal solutions, follows from applying \cite[Theorem 5.7]{lasserrebook} to Problem $(\mathbf{Q}_N)$.
\par \fin
\end{proof}

\definecolor{LightCyan}{rgb}{0.94,0.94,0.94}

\begin{table}[h]
\begin{center}
\begin{tabular}{|>{\columncolor{LightCyan}}l|>{\columncolor{LightCyan}}l|>{\columncolor{LightCyan}}r>{\columncolor{LightCyan}}r|>{\columncolor{LightCyan}}r>{\columncolor{LightCyan}}r|>{\columncolor{LightCyan}}r>{\columncolor{LightCyan}}r|}\hline
\hline
 \multicolumn{2}{|>{\columncolor{LightCyan}}c}{Dim} &  \multicolumn{2}{|>{\columncolor{LightCyan}}c}{2} &  \multicolumn{2}{|>{\columncolor{LightCyan}}c}{3} &  \multicolumn{2}{|>{\columncolor{LightCyan}}c|}{10}\\ \hline
 \rowcolor{LightCyan}\texttt{$\tau$} & \texttt{n}    &  \texttt{Time(Ave)}&  \texttt{Gap(Ave)} &  \texttt{Time(Ave)}&  \texttt{Gap(Ave)} &  \texttt{Time(Ave)}&  \texttt{Gap(Ave)} \\ \hline
\textbf{$\tau$=1.5} & 10    & 0.20	 & 0.00000001 & 0.28    & 0.00000001 & 0.86   & 0.00000000 \\
                    & 100   & 1.71	 & 0.00000001 & 3.16    & 0.00000001 & 10.89  & 0.00000001 \\
                    & 500   & 10.78	 & 0.00000001 & 15.84   & 0.00000001 & 51.23  & 0.00000000 \\
                    & 1000  & 21.22	 & 0.00000001 & 30.67   & 0.00000001 & 103.17 & 0.00000001 \\
                    & 5000  & 103.50 & 0.00000001 & 178.50  & 0.00000001 & 566.64 & 0.00000001 \\
                    & 10000 & 210.22 & 0.00000001 & 455.05  & 0.00000001 & 1330.36& 0.00000001 \\\hline
\textbf{$\tau$=2}   & 10    & 0.06 	 & 0.00000000 & 0.12    & 0.00000000 & 0.46   & 0.00000000 \\
                    & 100   & 0.40 	 & 0.00000000 & 0.99    & 0.00000000 & 4.63   & 0.00000000 \\
                    & 500   & 1.50	 & 0.00000000 & 5.83    & 0.00000000 & 21.63  & 0.00000000 \\
                    & 1000  & 3.27	 & 0.00000000 & 11.08   & 0.00000000 & 44.04  & 0.00000000 \\
                    & 5000  & 17.06	 & 0.00000000 & 58.16   & 0.00000000 & 218.82 & 0.00000000 \\
                    & 10000 & 33.19	 & 0.00000000 & 118.91  & 0.00000000 & 455.30 & 0.00000000 \\\hline
\textbf{$\tau$=3}   & 10    & 0.19	 & 0.00000001 & 0.31    & 0.00000001 & 0.99   & 0.00000001 \\
                    & 100   & 1.88	 & 0.00000001 & 3.60    & 0.00000001 & 12.71  & 0.00000001 \\
                    & 500   & 10.82	 & 0.00000001 & 17.87   & 0.00000001 & 57.91  & 0.00000001 \\
                    & 1000  & 21.73  & 0.00000001 & 33.99   & 0.00000001 & 118.11 & 0.00000001 \\
                    & 5000  & 110.87 & 0.00000001 & 181.17  & 0.00000001 & 646.46 & 0.00000001 \\
                    & 10000 & 245.66 & 0.00000001 & 477.38  & 0.00000001 & 1616.26& 0.00000001 \\\hline
\textbf{$\tau$=3.5} & 10    & 0.33	 & 0.00000001 & 0.47    & 0.00000001 & 1.75   & 0.00000001 \\
                    & 100   & 3.87	 & 0.00000001 & 5.44    & 0.00000001 & 19.71  & 0.00000001 \\
                    & 500   & 18.62  & 0.00000001 & 26.99   & 0.00000001 & 92.64  & 0.00000001 \\
                    & 1000  & 37.06  & 0.00000001 & 51.50   & 0.00000001 & 192.77 & 0.00000001 \\
                    & 5000  & 280.27 & 0.00000001 & 304.94  & 0.00000001 & 1178.17& 0.00000001 \\
                    & 10000 & 964.18 & 0.00000001 & 872.29  & 0.00000001 & 2431.58& 0.00000001 \\\hline
\multicolumn{8}{c}{}
\end{tabular}
    \caption{Computational results for Weber problem with different norms and different dimensions.\label{t:weber}}
    \end{center}
    \end{table}

\section{Computational Experiments}
\label{s:4}
A series of computational experiments have been performed in order
to evaluate the behavior of the proposed methodology.
Programs have been coded in MATLAB R2010b and executed in a PC with an
Intel Core i7 processor at 2x 2.93 GHz and 16 GB of RAM. The semidefinite programs have been solved by calling SDPT3 4.0\cite{sdpt3}.

We run the algorithm for several well-known continuous single facility convex ordered  location problems: Weber, center, k-center and general ordered median problem with random non-increasing monotone lambda. For each of them, we obtain the CPU times for computing solutions as well as the accuracy given by the solver SDPT3 4.0. In addition, to illustrate the application of the result in Theorem \ref{t:th5}, we also report results on a problem which consists of minimizing the range of distances in $\mathbb{R}^3$ with  two additional non-convex constraints. In this case, we include running times and gap with respect to upper bounds obtained with the battery of functions in \verb"optimset" of MATLAB   which only provide approximations on the exact solutions (optimality cannot be certified).

In this last case, in order to compute the accuracy of an obtained solution, we use the following measure for the error (see \cite{waki}):
\begin{equation} \label{eq:error}
\epsilon_{\rm obj} = \dfrac{|\text{the optimal value of the SDP } - {\rm fopt}|}{\max\{1, {\rm fopt}\}},
\end{equation}
where ${\rm fopt}$ is the approximated optimal value obtained with the functions in  \verb"optimset". The reader may note that we solve relaxed problems that give lower bounds. Therefore, the gap of our lower bounds is computed with respect to upper bounds which implies that actually may be even better than the one reported. (See Table \ref{t:range}.)

We have organized our computational experiments in three different problems types. Our test problems are set of points  randomly generated  on the $[0,10000]$  hypercubes of the $d$-dimensional space, $d=2,3,10$.  For  Weber, center and $k$-centrum problems, we could solve instances with at least 10000 points and for different $\ell_{\tau}$-norms, $\tau=1.5,2,3,3.5$. The general case with random lambda weights is harder and we only solved in all cases instances up to 1000 points.

Our goal is to present the results organized per problem type, framework space ($\mathbb{R}^d$, $d=2,3,10$) and norm ($\ell_{\tau}$,  $\tau=1.5,2,3,3.5$). Tables \ref{t:weber}, \ref{t:center} and \ref{t:kcentrum}  report our results on  the problems of minimizing the weighted sum of distances (Weber), the maximum distance (center) and the sum of the  $n/2$ largest distances ($n/2$-centrum). In all cases, the accuracy and resolutions times needed for the solver are rather good, even for 10000 points in dimension 10 and rather complicated norms (e.g. $\ell_{3.5}$). The reader can see that the hardest type is the $k$-centrum. In this problem type CPU times increase one order of magnitude because the structure of the problem does not allow to reduce the size of the formulation.

\begin{table}[h]
\begin{center}
\begin{tabular}{|>{\columncolor{LightCyan}}l|>{\columncolor{LightCyan}}l|>{\columncolor{LightCyan}}r>{\columncolor{LightCyan}}r|>{\columncolor{LightCyan}}r>{\columncolor{LightCyan}}r|>{\columncolor{LightCyan}}r>{\columncolor{LightCyan}}r|}\hline
 \multicolumn{2}{|>{\columncolor{LightCyan}}c|}{Dim} &  \multicolumn{2}{>{\columncolor{LightCyan}}c|}{2} &  \multicolumn{2}{>{\columncolor{LightCyan}}c|}{3} &  \multicolumn{2}{>{\columncolor{LightCyan}}c|}{10}\\\hline
 \rowcolor{LightCyan}\texttt{$\tau$} & \texttt{n}    &  \texttt{Time(Ave)}&  \texttt{Gap(Ave)} &  \texttt{Time(Ave)}&  \texttt{Gap(Ave)} &  \texttt{Time(Ave)}&  \texttt{Gap(Ave)} \\\hline
\textbf{$\tau$=1.5} & 10    & 0.22	 & 0.00000103 & 0.39   & 0.00000000 & 1.02   & 0.00010830 \\
                    & 100   & 2.13	 & 0.00000000 & 8.75   & 0.00000000 & 43.28  & 0.00002092 \\
                    & 500   & 12.59	 & 0.00000000 & 63.16  & 0.00000000 & 237.99 & 0.00000835 \\
                    & 1000  & 27.11	 & 0.00000000 & 115.35 & 0.00000000 & 327.04 & 0.00000615 \\
                    & 5000  & 150.34 & 0.00000001 & 357.49 & 0.00000000 & 1231.78& 0.00000539 \\
                    & 10000 & 371.39 & 0.00000000 & 1297.23& 0.00000000 & 2762.51& 0.00000714 \\\hline
\textbf{$\tau$=2}   & 10    & 0.11   & 0.00000005 & 0.17   & 0.00000001 & 0.44   & 0.00002446 \\
                    & 100   & 1.34   & 0.00000012 & 2.09   & 0.00000001 & 7.68   & 0.00024196 \\
                    & 500   & 8.80   & 0.00000008 & 13.69  & 0.00000001 & 50.25  & 0.00001027 \\
                    & 1000  & 20.85  & 0.00000051 & 30.27  & 0.00000001 & 115.64 & 0.00001137 \\
                    & 5000  & 119.90 & 0.00000133 & 212.21 & 0.00000001 & 912.44 & 0.00384177 \\
                    & 10000 & 287.13 & 0.00000265 & 467.08 & 0.00000001 & 1510.41& 0.00823965 \\\hline
\textbf{$\tau$=3}   & 10    & 0.23   & 0.00000021 & 0.37   & 0.00000001 & 1.08   & 0.00010908 \\
                    & 100   & 2.32   & 0.00000000 & 7.48   & 0.00000000 & 37.66  & 0.00001214 \\
                    & 500   & 14.47  & 0.00000001 & 52.27  & 0.00000018 & 209.46 & 0.00000624 \\
                    & 1000  & 28.93  & 0.00000001 & 119.96 & 0.00000056 & 293.48 & 0.00002443 \\
                    & 5000  & 160.96 & 0.00000001 & 456.11 & 0.00010340 & 1223.41& 0.00024982 \\
                    & 10000 & 434.79 & 0.00000000 & 6829.70& 0.00002703 & 2663.15& 0.00026867 \\\hline
\textbf{$\tau$=3.5} & 10    & 0.33	 & 0.00000000 & 0.56   & 0.00000570 & 1.80   & 0.00004732 \\
                    & 100   & 4.47	 & 0.00000013 & 13.72  & 0.00000036 & 68.91  & 0.00002253 \\
                    & 500   & 21.93	 & 0.00000002 & 90.65  & 0.00000020 & 373.21 & 0.00000236 \\
                    & 1000  & 44.82	 & 0.00000002 & 179.48 & 0.00001955 & 603.41 & 0.00001127 \\
                    & 5000  & 244.04 & 0.00000002 & 551.16 & 0.00000219 & 2279.93& 0.00020991 \\
                    & 10000 & 510.25 & 0.00000002 & 2618.87& 0.00001699 & 4814.26& 0.00006055 \\\hline
\multicolumn{8}{c}{}
\end{tabular}
    \caption{Computational results for the Center problem with different norms and different dimensions.\label{t:center}}
\end{center}
    \end{table}

\begin{table}[h]
\begin{center}
\begin{tabular}{|>{\columncolor{LightCyan}}l|>{\columncolor{LightCyan}}l|>{\columncolor{LightCyan}}r>{\columncolor{LightCyan}}r|>{\columncolor{LightCyan}}r>{\columncolor{LightCyan}}r|>{\columncolor{LightCyan}}r>{\columncolor{LightCyan}}r|}\hline
 \multicolumn{2}{|>{\columncolor{LightCyan}}c|}{Dim} &  \multicolumn{2}{>{\columncolor{LightCyan}}c|}{2} &  \multicolumn{2}{>{\columncolor{LightCyan}}c|}{3} & \multicolumn{2}{>{\columncolor{LightCyan}}c|}{10}\\\hline
 \rowcolor{LightCyan}\texttt{$\tau$} & \texttt{n}    &  \texttt{Time(Ave)}&  \texttt{Gap(Ave)} &  \texttt{Time(Ave)}&  \texttt{Gap(Ave)} &  \texttt{Time(Ave)}&  \texttt{Gap(Ave)} \\\hline
\textbf{$\tau$=1.5} & 10    & 0.29    & 0.00000001 & 0.42    & 0.00000000 & 1.01     & 0.00019138 \\
                    & 100   & 4.67    & 0.00000000 & 8.01    & 0.00000000 & 30.45    & 0.00000003 \\
                    & 500   & 39.92   & 0.00000001 & 49.26   & 0.00000000 & 205.37   & 0.00000002 \\
                    & 1000  & 68.11   & 0.00000001 & 109.52  & 0.00000000 & 437.95   & 0.00000002 \\
                    & 5000  & 476.95  & 0.00000000 & 668.08  & 0.00000000 & 4738.70  & 0.00000002 \\
                    & 10000 & 1242.57 & 0.00000001 & 2016.01 & 0.00000000 & 15348.57 & 0.00000002 \\\hline
\textbf{$\tau$=2}   & 10    & 0.13    & 0.00000000 & 0.18    & 0.00000000 & 0.45     & 0.00008221 \\
                    & 100   & 1.36    & 0.00000000 & 2.03    & 0.00000000 & 7.56     & 0.00000001 \\
                    & 500   & 8.40    & 0.00000000 & 15.63   & 0.00000000 & 53.87    & 0.00000000 \\
                    & 1000  & 22.38   & 0.00000000 & 30.90   & 0.00000000 & 108.22   & 0.00000000 \\
                    & 5000  & 128.18  & 0.00000000 & 195.56  & 0.00000000 & 815.34   & 0.00000000 \\
                    & 10000 & 337.17  & 0.00000000 & 460.98  & 0.00000000 & 2373.23  & 0.00000000 \\\hline
\textbf{$\tau$=3}   & 10    & 0.30    & 0.00000001 & 0.42    & 0.00000000 & 1.09     & 0.00003397 \\
                    & 100   & 5.65    & 0.00000001 & 10.20   & 0.00000000 & 40.08    & 0.00000016 \\
                    & 500   & 50.36   & 0.00000001 & 72.35   & 0.00000000 & 225.13   & 0.00000004 \\
                    & 1000  & 100.17  & 0.00000001 & 145.24  & 0.00000000 & 463.74   & 0.00000004 \\
                    & 5000  & 582.84  & 0.00000002 & 894.95  & 0.00000000 & 4067.13  & 0.00000002 \\
                    & 10000 & 1715.00 & 0.00000001 & 2565.21 & 0.00000002 & 13649.88 & 0.00000005 \\\hline
\textbf{$\tau$=3.5} & 10    & 0.44    & 0.00000006 & 0.60    & 0.00000000 & 1.81     & 0.00018065 \\
                    & 100   & 10.90   & 0.00000001 & 16.97   & 0.00000000 & 60.45    & 0.00000004 \\
                    & 500   & 80.28   & 0.00000002 & 124.50  & 0.00000002 & 379.20   & 0.00000004 \\
                    & 1000  & 171.62  & 0.00000002 & 252.79  & 0.00000002 & 852.59   & 0.00000004 \\
                    & 5000  & 1033.28 & 0.00000002 & 1700.71 & 0.00000002 & 8510.86  & 0.00000006 \\
                    & 10000 & 2345.25 & 0.00000002 & 4682.55 & 0.00000002 & 27723.99 & 0.00000004 \\\hline
\multicolumn{8}{c}{}
\end{tabular}
    \caption{Computational results for the 0.5-centrum problem with different norms and different dimensions.\label{t:kcentrum}}
\end{center}
    \end{table}

Table \ref{t:gomp} reports our results on the general ordered median problem with non-increasing monotone lambda weights. For this family of problems we could solve with our general formulation, in all cases, problem  sizes of 1000 points. Accuracy is rather good and the bottleneck here is the size of the SDP object to be handle since the fact that all lambda are non-null makes it impossible to simplify the representation.

\begin{table}[h]
\begin{center}
\begin{tabular}{|>{\columncolor{LightCyan}}l|>{\columncolor{LightCyan}}l|>{\columncolor{LightCyan}}r>{\columncolor{LightCyan}}r|>{\columncolor{LightCyan}}r>{\columncolor{LightCyan}}r|>{\columncolor{LightCyan}}r>{\columncolor{LightCyan}}r|}\hline
 \multicolumn{2}{|>{\columncolor{LightCyan}}c|}{Dim} &  \multicolumn{2}{>{\columncolor{LightCyan}}c|}{2} &  \multicolumn{2}{>{\columncolor{LightCyan}}c|}{3} & \multicolumn{2}{>{\columncolor{LightCyan}}c|}{10}\\\hline
 \rowcolor{LightCyan}\texttt{$\tau$} & \texttt{n}    &  \texttt{Time(Ave)}&  \texttt{Gap(Ave)} &  \texttt{Time(Ave)}&  \texttt{Gap(Ave)} &  \texttt{Time(Ave)}&  \texttt{Gap(Ave)} \\\hline
\textbf{$\tau$=1.5} & 10    & 0.24    & 0.00000000 & 0.40    & 0.00000000 & 1.19    & 0.00000256 \\
                    & 100   & 4.03    & 0.00000001 & 6.73    & 0.00000001 & 22.22   & 0.00000000 \\
                    & 500   & 159.42  & 0.00000001 & 190.77  & 0.00000001 & 380.99  & 0.00000000 \\
                    & 1000  & 1270.76 & 0.00000013 & 1730.61 & 0.00000002 & 2379.03 & 0.00000000 \\
\textbf{$\tau$=2}   & 10    & 0.14    & 0.00000003 & 0.18    & 0.00000001 & 0.62    & 0.00002783 \\
                    & 100   & 5.11    & 0.00000255 & 6.94    & 0.00000002 & 17.07   & 0.00000003 \\
                    & 500   & 427.54  & 0.00000314 & 443.47  & 0.00000266 & 1092.70 & 0.00000054 \\
                    & 1000  & 2079.97 & 0.00000315 & 7702.03 & 0.00000241 & 9235.59 & 0.00000073 \\
\textbf{$\tau$=3}   & 10    & 0.51    & 0.00000005 & 0.72    & 0.00000010 & 1.89    & 0.00000405 \\
                    & 100   & 64.14   & 0.00000483 & 58.10   & 0.00000119 & 152.45  & 0.00000148 \\
                    & 500   & 1532.17 & 0.00018236 & 2269.39 & 0.00003466 & 7950.09 & 0.00000931 \\
                    & 1000  & 4546.73 & 0.00025375 & 5678.17 & 0.00008893 & 18011.79& 0.00003434 \\
\textbf{$\tau$=3.5} & 10    & 0.63    & 0.00000130 & 1.48    & 0.00001077 & 7.05    & 0.00000197 \\
                    & 100   & 33.32   & 0.00000097 & 302.76  & 0.00009247 & 596.20  & 0.00024303 \\
                    & 500   & 1555.08 & 0.00000524 & 2774.06 & 0.00035570 & 8705.00 & 0.00014431 \\
                    & 1000  & 7625.95 & 0.00001702 & 7681.10 & 0.00059695 & 18845.92& 0.00020324 \\ \hline
\multicolumn{8}{c}{}
\end{tabular}
    \caption{Results for convex ordered median problem with general $\lambda$, different norms and dimensions.\label{t:gomp}}
\end{center}
    \end{table}

Finally, we also report in Table \ref{t:range}, for the sake of illustration,  an example of application of the result in Theorem \ref{t:th5}. This problem consists of the minimization of the difference between the maximum and minimum distances of a number of demand points ($n$ ranging between 10 and 1000) with respect to a solution point that must belong to a non-convex feasible region defined by the following two non-convex constraints  $x_{1}^2-2x_{2}^2-2x_{3}^2\geq0$ and $-2x_{1}^2+5x_{2}^2+4x_{3}^2\geq0$ within the unit cube. Clearly, this case is more difficult to solve since this problem is non-convex and thus, we need to resort to the hierarchy of relaxations introduced in Theorem  \ref{t:th5}. Nevertheless, we have obtained  good results in this case  even with the first relaxation order.

\begin{table}[h]
\begin{center}
\begin{tabular}{|>{\columncolor{LightCyan}}l|>{\columncolor{LightCyan}}l|>{\columncolor{LightCyan}}r>{\columncolor{LightCyan}}r|}\hline
 \multicolumn{2}{|>{\columncolor{LightCyan}}c|}{Dim} &  \multicolumn{2}{>{\columncolor{LightCyan}}c|}{3} \\\hline
 \rowcolor{LightCyan}\texttt{$\tau$} & \texttt{n}    &  \texttt{Time(Ave)}&  \texttt{Gap(Ave)} \\\hline
\textbf{$\tau$=2} & 10    & 0.46   & 0.00001623 \\
                  & 100   & 9.45   & 0.00457982 \\
                  & 500   & 80.56  & 0.00030263 \\
                  & 1000  & 204.96 & 0.00094492 \\ \hline
\multicolumn{4}{c}{}
\end{tabular}
    \caption{Computational results for the Range problem with two nonconvex constraints.\label{t:range}}
\end{center}
    \end{table}

\section{Conclusions \label{s:5}}
We develop a unified tool for minimizing convex ordered median location problems in finite dimension and with general $\ell_{\tau}$-norms.  We report computational results that show the powerfulness of this methodology to solve medium size continuous location problems.

This new approach solves a broad class of convex and non-convex continuous location problems that, up to date, were only partially solved in the specialized literature. We have tested this methodology with some medium size standard ordered median location problems in different dimensions and with different norms.

It is important to emphasize that one of the contributions of our approach is that the \textit{same algorithm} is used to solve all this family of location problems. This is an interesting novelty as compared with previous approaches, of course at the price of loosing some speed in the computations compared with some tailored algorithms for specific problems. Obviously, our goal was not to compete with previous algorithms since most of them are either designed for specific problems  or only applicable for planar problems. However, in all cases we obtained reasonable CPU times and accurate results. Furthermore, in many cases our running times for many problems could not be even compared with others since nobody had solved them before.

\section*{Acknowledgements}
The authors were partially supported by the project FQM-5849 (Junta de Andaluc\'ia$\backslash$FEDER). The first and third author were partially supported by the project  MTM2010-19576-C02-01 (MICINN, Spain). The first author was also supported by research group FQM-343.

\end{document}